\def\r{\mathbb{R}}
 \def\c{\mathbf{c}}
\def\q{\mathbf{q}}
\def\n{\mathbb{N}}
\def\R{\mathbf{r}}
\def\e{\mathcal{E}}
 \def\s{\mathcal{S}} 
\newtheorem{theorem}{Theorem}[section]
\newtheorem{corollary}[theorem]{Corollary}
\newtheorem{lemma}[theorem]{Lemma}
\theoremstyle{definition}
\newtheorem{example}[theorem]{Example}
\newtheorem{remark}[theorem]{Remark}
\title[Capillary liquid channels   in  cylindrical support surfaces]{Capillary liquid channels   in  cylindrical support surfaces: stability and bifurcation} 
\author{Rafael L\'opez}  
\address{Departamento de Geometr\'{\i}a y Topolog\'{\i}a\\  Universidad de Granada. 18071 Granada, Spain} 
\email{rcamino@ugr.es}
\keywords{capillarity, cylindrical surface, stability, Plateau-Rayleigh instability,  bifurcation.}
\subjclass{76B45, 53A10, 34K18, 35J60, 58J55}
\begin{document} 
 
\begin{abstract}  
Planes and circular cylinders are models of interfaces of a fluid when the support surface is translationally invariant in a direction of the space. After a study of the eigenvalues of the Jacobi operator, it is investigated when planar strips and sections of circular cylinders are stable in   cylindrical symmetric support surfaces.  This analysis depends on the curvature of the support at the contact points with the interface. The Plateau-Rayleigh instability phenomenon is studied finding the critical value $h_0>0$ such that rectangular pieces of planar strips or circular cylinders of length greater than $h_0$ are necessarily unstable.    It is also studied when new morphologies of capillary surfaces can emerge from given circular cylinders. Using the method of bifurcation by simple eigenvalues, we establish conditions on the support surface that prove that  when $0$ is a simple eigenvalue of the Jacobi operator,  there is bifurcation from  explicit circular cylinders. It will be presented examples of supports (parabolic and catenary cylinders) where this bifurcation appears. 
\end{abstract} 
\maketitle

\section{Introduction to the problem}

Consider a stream of liquid $\Omega$ deposited  along a solid support  $\s$ under ideal conditions, such as homogeneity  of materials, rigidity, smoothness and zero hysteresis. It is  also assumed that the effect of gravity is negligible.  
When $\Omega$ attains its equilibrium in a static position, its shape   is given by the surface tension of the air-liquid interface $\Sigma$ and governed by the Laplace-Young equations \cite{la,yo}. The   interface $\Sigma$ minimizes locally the surface energy, which it is  given by   
\begin{equation}\label{e1}
\e(\Sigma)=\lvert\Sigma\rvert-\frac{\sigma_{SA}-\sigma_{SL}}{\sigma_{LA}} \lvert\Sigma^*\rvert,
\end{equation}
where $\lvert\Sigma\rvert$ is the area of $\Sigma$  and $\lvert\Sigma^*\rvert$ is the area of the region $\Sigma^*$ wetted by $\Omega$ in contact with $\s$.  Here $\sigma_{ij}$  denotes the interfacial surface tensions  of the solid (S), liquid (L) and air (A) phases. The inequality $\lvert\sigma_{SA}-\sigma_{SL}\rvert<\sigma_{LA}$ is assumed to assure the existence of wetting in mechanical equilibrium.   A critical point  of $\e$ is a   surface of constant mean curvature  $H$ (cmc surface in short). The constancy of the mean curvature of $\Sigma$ is the Laplace equation $ P_L-P_A=2H \sigma_{LA}$, where $P_L$ and $P_A$ are the pressures of the  liquid and air phases. A second condition is the known Young equation which  asserts that the contact angle $\gamma\in (0,\pi)$ between $\Sigma$ and the walls of $\s$ is constant, with $\cos\gamma=(\sigma_{SA}-\sigma_{SL})/\sigma_{LA}$.   A surface with constant mean curvature and with constant contact angle with the support is called   a capillary surface on $\s$.

Knowledge of the surface tensions is of special relevance in fluid theory because it can explain a variety of experimental phenomena \cite{dege}.  Many of the measurement techniques involve the determination of the shape of $\Sigma$ \cite{ad}, and thus the geometry of the different morphologies that can adopt a liquid. Although the hypothesis of zero gravity implies contexts of micro-gravity, it is also applicable when the weight of the liquid is almost negligible, as for example, in the interfaces of biological microstructures and self-assembly systems \cite{and,hyd,hys,tho}.

 Stable configurations of interfaces are interesting because they are physically realizable  \cite{bst1}.  A capillary surface is stable if for all compact perturbations of the surface, the energy $\e$ does not decrease. A reasonable assumption is that the volume of the fluid is preserved in these perturbations.  Historically, instability was firstly observed  by Plateau in 1871 when he realized that a stream of water dropping vertically breaks    in a set of spherical droplets with the same amount of volume but less surface area \cite{pl}.  It was Rayleigh who theoretically proved   that  a stream of radius $r$ breaks into drops if its length is greater than $2\pi r$ \cite{ra}.   In the theory of cmc surfaces,   two results on stability deserve to point out. Assuming that the surface is closed (compact without boundary), spheres are the  only stable closed cmc surfaces \cite{bc}. In case that the surface is spanned by a circle, then circular disks and spherical caps are the only stable compact cmc surfaces with the topology of a disk \cite{alp}.

The number of explicit examples of cmc surfaces that can be obtained analytically   is few  due that the mean curvature equation is non-linear \cite{lo0}.  A   way to obtain cmc surfaces   is imposing certain symmetries on the surface. In this paper we will assume that the support $\s$ is translationally  invariant in a spatial direction. Then the walls of $\s$ is a cylindrical surface   generated by a planar curve $\c$. We will denote $\s$ by  $\s(\c)$.  A cylindrical surface is formed by all straight-lines, called rulings, that are parallel   and   pass through the given planar curve $\c$. The rulings will be assumed to be horizontal lines. Since   the support is invariant in a spatial direction, it is plausible to admit that the liquid  deposited on $\s(\c)$  acquires the same symmetry. Therefore, the interface $\Sigma$ is also a cylindrical surface and we say that $\Sigma$ is a {\it liquid channel}. See Fig. \ref{fig1}. The classification of   cylindrical   cmc surfaces is well known, being  planes (if $P_L-P_A=0$) and circular cylinders (if $P_L-P_A\not=0$) \cite{lo0}.

\begin{figure}[hbtp]
\begin{center}\scalebox{1.2}{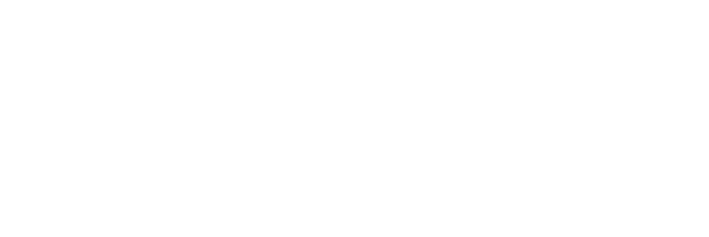 }
\end{center}
\caption{Cross section of a liquid channel $\Omega$ deposited on a cylindrical support $\s(\c)$. It is assumed that $\s(\c)$ is  symmetric about a vertical plane. The contact angle is determined by the unit normal vectors $N$ and $\widetilde{N}$ of $\Sigma$ and $\s(\c)$, respectively.  }\label{fig1}
\end{figure}

In the literature, the  supports usually studied have  been planes and cylinders. For planes, it is included the case of parallel planes and    wedges (two planes which intersect along a straight-line).  For these supports, circular cylinders are capillary surfaces provide its axis is parallel to the plane   and suitably placed in order to fulfill the   condition on the contact angle  \cite{concus,dav,lan,lo1,lo00,maj,vo,vo2,vo5,yh}. In case that  the liquid is confined in  strips of the plane, if the contact angle is lower than $\pi/2$, then the liquid channel is stable; if the angle is more than $\pi/2$, then $\Sigma$ is unstable with a number of unstable wavenumbers  \cite{benilov,bs,dav,herrada,rss}. An interesting case  is when the support is formed by planar strips that alternate hydrophilic and hydrophobic materials \cite{bkk,ghll,sl,ubal}. For the stability  of cmc surfaces connecting two parallel planes, see \cite{vo5,vo6}.
 
 Another support surface of cylindrical type  is a circular cylinder ($\c$ is a circle). Examples of capillary surfaces on $\s(\c)$ are planes parallel to the rulings of $\s(\c)$ and pieces of infinite circular cylinders whose axis is parallel to the rulings of $\s(\c)$. Vogel recently studied the stability of both surfaces proving that  if the length of the capillary surface is sufficiently large, then both surfaces  are unstable \cite{vo4} (see also \cite{yd}).

The goal of this paper is to investigate liquid channels for  a general support surface $\s(\c)$ of cylindrical type. As we said, we can think $\c=\c(s)$ as a planar curve contained in a vertical plane. Then $\s(\c)$ is the product $\c(s)+t\vec{v}$, $s\in I\subset\r$, $t\in\r$, where $\vec{v}$ is a unitary vector orthogonal to the plane containing $\c$. The support $\s(\c)$ can be parametrized by $(s,t)\mapsto \c(s)+t\vec{v}$. This work is motivated by \cite{rss}, where it was assumed that the liquid is pinned between two fixed straight-lines of the interface $\Sigma$. However, in the present paper, it  will be assumed that the boundary curve of $\Sigma$ can freely move  on the support surface.  Also the support surface $\s(\c)$ will be symmetric with respect to  a vertical plane $P$ and orthogonal to the plane containing the curve $\c$.  Under this symmetry condition, two known cmc surfaces $\Sigma$ are candidates as capillary surfaces on $\s(\c)$: an infinite    horizontal  planar strip and a  piece of an infinite circular cylinder whose axis is parallel to the rulings of $\s(\c)$. Stability of both surfaces are investigated in Sects. \ref{sec3} and \ref{sec4}.  Without to enter in the precise detail of the results, for horizontal planar strips  we prove:

\begin{quote} {\it Consider $\Sigma$ an infinite horizontal planar   strip which it is a capillary surface on $\s(\c)$. If the curvature $\kappa$ of $\c=\c(s)$ is positive (resp. non-positive) at the contact points $s=\tau_0$, $t\in\r$, then $\Sigma$ is unstable (resp. strongly stable). Moreover, if $\kappa(\tau_0)>0$, we give estimates of the length $h$ of $\Sigma$ (along the rulings) that ensures that if $h>h_0$ for some value $h_0>0$, then $\Sigma$ is unstable. }
\end{quote} 
The last statement extends the known Plateau-Rayleigh instability criterion to the case  of arbitrary shape of $\s(\c)$. A similar result is obtained when $\Sigma$ is an infinite piece of a circular cylinder of radius $r>0$ supported horizontally on $\s(\c)$. Here $\Sigma\cap\s(\c)$ are two horizontal  straight lines parallel to $\vec{v}$.
 
\begin{quote}{\it  Consider $\Sigma$ a circular cylinder of radius $r$ with axis parallel to the rulings of $\s(\c)$ and supported on   $\s(\c)$. Let $s=\tau_0$, $t\in\r$,  be the points of $\c$ that determine the contact between $\Sigma$ and $\s(\c)$. Let $\gamma$ be the contact angle. If $r\kappa(\tau_0)\pm \cos\gamma\geq 0$, then $\Sigma$ is unstable, where the sign $+$ or $-$ has to be precise depending on the configuration of the liquid.   If $r\kappa(\tau_0)\pm \cos\gamma<0$, we give criteria to distinguish if $\Sigma$ is or not stable.}
\end{quote}

 A further step in the study of the stability of  liquid channels is to ask whether new examples of capillary surfaces can emerge from the known ones. This question is natural thinking in the classical Plateau-Rayleigh experiment, where large cylindrical streams break in spherical droplets. In the present case, we ask on the existence of new  capillary surfaces bifurcating from   circular cylinders supported on $\s(\c)$. This is no new and phenomena of bifurcation  when the support is relatively simple have been studied in the literature:   sessile and  pendent droplets with circular boundary \cite{bru,we-c,we-c2}; liquid bridges between parallel planes \cite{kpm1,kpm2,vo3,zz}; and circular cylinders confined in strips \cite{bkk,ghll,lo2}  and wedges \cite{we-c2,lo1}.

 In this paper, the bifurcation appears after the study of instability of circular cylinders. When the support has vertical symmetry,  and fixing a contact angle $\gamma$, we will see that there is a   family of circular cylinders making the same contact angle $\gamma$ with the support and where   the radii of these cylinders vary in a certain interval of $\r$. The parameter used  in our results of bifurcation is the radius of these cylinders, or equivalently, the mean curvatures of the cylinders. For the precise statement of the bifurcation result, we refer to Sect. \ref{sec5}. Roughly speaking, we prove:
 \begin{quote}{\it  Consider $\Sigma$ a circular cylinder which it is supported horizontally on $\s(\c)$ such that its axis is parallel to the rulings of $\s(\c)$ and making the same contact angle along the two contact lines. Under certain conditions on $r\kappa(\tau_0)\pm\cos\gamma$ at the contact point $s=\tau_0$ between $\Sigma$ and $\s(\c)$, there is a one-parameter of cmc surfaces making a contact angle $\gamma$ with $\s(\c)$. These surfaces are periodic along the direction of the rulings. }
 \end{quote}
    By the Laplace equation, the mean curvature as a parameter  can be seen as a type of control of the pressures in both sides of the interface. 
    
    This paper is organized as follows.  In Sect. \ref{sec2} we present the mathematical framework, fixing the notation and obtaining the stability operator associated to the energy $\e$. In Sects. \ref{sec3} and \ref{sec4} we study the stability of planes and circular cylinders establishing a  Plateau-Rayleigh phenomenon of instability. We give an analysis of the eigenvalues of the Jacobi operator of $\e$. In Sect. \ref{sec5}, we ask for the existence of new examples of capillary surfaces on $\s(\c)$ other planes and circular cylinders.   First, we present the machinery of bifurcation necessary in our results which it is  based on the known case of simple eigenvalues of Crandall and Rabinowitz. It will be  established conditions that ensure that when $0$ is a simple eigenvalue of the Jacobi operator,   new examples of capillary surfaces bifurcate from a given circular cylinder. To illustrate how the method works, it will be discussed  in Sect. \ref{sec6}  examples of support surfaces such as parabolic and catenary cylinders when the contact angle is $\gamma=\pi/2$.

\section{The stability operator}\label{sec2}

In this section,    it will be fixed the notation employed along this paper and we will recall the stability operator.  Let $\s$ be a  surface without self-intersections  (the support surface) that separates $\r^3$ in two domains and let us denote by $W$ one of them.   Let $\Sigma$ be an orientable    surface with possible non-empty boundary $\partial\Sigma$. We say that an immersion   $X\colon\Sigma\to\r^3$ is {\it admissible} in $W$ if  $X(\Sigma)$ is contained $W$ and $X(\partial\Sigma)$  is contained in $\s$.  If there is not confusion, we will write $\Sigma$ (resp. $\partial\Sigma$)  instead of $X(\Sigma)$ (resp. $X(\partial\Sigma)$). It is assumed the physical assumption that the boundary $\partial\Sigma$ of $\Sigma$ moves freely on $\s$ whereas the interior of $\Sigma$ is contained in $W$.  The surface $\Sigma$ together some regions $\Sigma^*$ of $\s$ enclose a domain $\Omega$ of $\r^3$  (the fluid).  By $\Sigma^*$ we denote the region of $\s$ wetted by the fluid,   $\Sigma^*=\partial\Omega\cap \s$.

An {\it admissible variation} of $X$ is a smooth map $\overline{X}:\Sigma\times (-\epsilon,\epsilon)\to \r^3$ such that $\overline{X}(p,0)=X(p)$ and the maps $X_t\colon p\mapsto \overline{X}(p,t)$ are   admissible immersions of $\Sigma$ in $W$ for all $|t|<\epsilon$. For a fixed $t$, define $\Sigma_t=X_t( \Sigma)$ and $\Omega(t)$ the domain bounded by $\partial\Sigma_t$ in $\partial W$. We will assume that all variations have compact support, that is, for each $t$ there is a compact set $K\subset\Sigma$ such that $X_t(p)=X(p)$ for all $p\in\Sigma\setminus K$.

Let $\gamma\in (0,\pi)$. The surface $\Sigma$ is called a {\it capillary surface} if it is a critical point of the energy   \eqref{e1} for all compactly supported volume-preserving admissible variations. A  capillary surface  is characterized by the constancy of its mean curvature $H$ and by the fact that the angle between $\Sigma$ and $\s$ along $\partial\Sigma$ is constant and it coincides with $\gamma$. The angle $\gamma$ is the angle formed by the unit normal vectors $N$ of $\Sigma$ and $\widetilde{N}$ of $\s$ along $\partial\Sigma$, $\cos\gamma=\langle N,\widetilde{N}\rangle$. The vector   $N$ points into the fluid $\Omega$ whereas $\widetilde{N}$ points outwards $\Omega$ (Fig. \ref{fig1}).

From now on, all support surfaces  will be cylindrical surfaces and symmetric about a vertical plane $P$. In order to make precise, let us introduce   the following notation. Let $(x,y,z)$ be canonical coordinates of Euclidean space $\r^3$ and let $\langle,\rangle$ be the Euclidean metric of $\r^3$. The terms vertical and horizontal will indicate  to be parallel to the $z$-axis or to the $xy$-plane respectively. Consider a planar curve $\c=\c(\tau)$, $\tau\in I$, where $I\subset\r$ is an interval, and suppose that $\c$ is contained in the  $xz$-plane,  $\c(\tau)=(x(\tau),0,z(\tau))$. Let $\s(\c)$ be the cylindrical surface determined by $\c$ whose rulings  are parallel to the $y$-line, $\s(\c)=\{\c(\tau)+t(0,1,0): \tau\in I,t\in\r\}$. A parametrization of $\s(\c)$ is 
\begin{equation}\label{xx}
X(\tau,t)=(x(\tau),t,z(\tau)),\quad \tau\in I,t\in\r.
\end{equation}
About the vertical symmetry, without loss of generality, we  will assume that the vertical plane of symmetry is   the $yz$-plane. If the domain $I$ is symmetric about $0\in I$, the vertical symmetry means that $\c(-\tau)=(-x(\tau),0,z(\tau))$, $\tau\in I$. 
  
  We show that planes and circular cylinders are examples of capillary surfaces on symmetric supports $\s(\c)$.

\begin{example}[Planar strips]\label{ex1} {\rm 
Let $\Pi$ be the horizontal plane of equation $z=c$ and suppose that $\Pi$ intersects $\s(\c)$. Recall that $H=0$. On the other hand, $\Pi$ is also a cylindrical surface whose generating curve is the straight-line $\R(s)=(s,0,c)$, $s\in\r$. The intersection of $\R$ with $\c$ can occur in many (symmetric) points (Fig. \ref{fig22}, left). However, the Young condition requires that the angle is the same in all these points. Therefore, we will simplify the situation by assuming that $\R$ and $\c$ only intersect at two (symmetric) points $s=\pm s_0$ (Fig. \ref{fig22}, middle). Then the capillary surface is the strip of $\Pi$  determined by both points. Note that the part of the plane outside $\s(\c)$ is also a capillary surface (Fig. \ref{fig22}, right). This surface has its own interest, but the arguments in the study of the stability cannot be applied to this situation.}
\end{example}

  \begin{figure}[hbtp]
\begin{center}\scalebox{1}{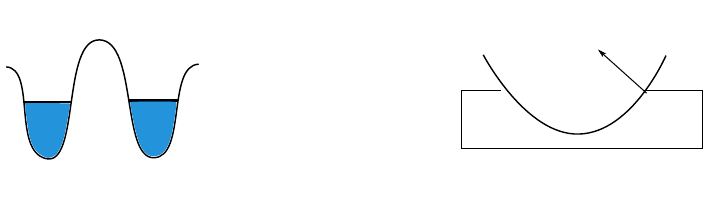 }
\end{center}
\caption{ A plane on $\s(\c)$ with different contact angles (left). A capillary strip on $\s(\c)$ (middle). A capillary plane outside of $\s(\c)$ (right). }\label{fig22}
\end{figure}

\begin{example}[Sections of circular cylinders]\label{ex2}
{\rm Let $C_r$ be a circular cylinder of radius $r>0$ whose axis is contained in the $yz$-plane and parallel to the $y$-axis. Then $C_r$ has constant mean curvature $H=1/(2r)$ (with the inward orientation). The surface $C_r$ is a cylindrical surface generated by a circle of radius $r>0$ centered at the $z$-axis. This circle can be parametrized by $\R(s)=  (r\sin(s),0,m+r\cos(s))$, $s\in\r$. As in Ex. \ref{ex1}, the intersection between $\R$ and  $\c$ can occur in many points but we will assume that this intersection only happens at two points, namely, $s=-s_0$ and $s= s_0$. Both points determine two arcs in $\R$, namely, $\{ -s_0\leq s\leq s_0\}$ and $\{s_0\leq s\leq 2\pi-s_0\}$. Each one these arcs define a subset in  $C_r$ which is called a section of a circular cylinder. Both sections circular cylinders are capillary surfaces on  $\s(\c)$.  

}\end{example}

We now recall  the formula of the second variation of $\e$. Let     $C^\infty_0(\Sigma)$ be the space of compactly supported smooth functions. A function $u\in C^\infty_0(\Sigma)$ determines a variation $\Sigma_t$ where $u$ is     the normal component of the variational vector field. The condition that the first variation of the enclosed volume by   $\Sigma_t$ vanishes  is equivalent to $\int_\Sigma u\, d\Sigma=0$. The expression of $\e''(0)$ is
\begin{equation}\label{eq1}
\e''(0)=-\int_\Sigma u(\Delta u+\lvert A\rvert^2 u)\, d\Sigma+\int_{\partial \Sigma}
u\Big(\frac{\partial u}{\partial\nu}-\q u\Big) ds,
\end{equation}
where
\begin{equation}\label{qq}
\q=\csc{\gamma} \widetilde{A}(\widetilde{\nu},\widetilde{\nu})+\cot{\gamma}\, A(\nu,\nu).
\end{equation}
 See \cite{rv,rss}. Here $\Delta$ is the Laplacian operator on $\Sigma$, $A$ is the second fundamental form of $\Sigma$ with respect to $N$ and $\widetilde{A}$ is the second fundamental  of $\s$ with respect to $-\widetilde{N}$.  The vectors $\nu$ and $\widetilde{\nu}$ are the exterior  unit conormal vectors of $\partial\Sigma$ on $\Sigma$ and on $\s$ respectively. If we are assuming that $\s$  is a cylindrical surface $\s(\c)$, then $\widetilde{A}(\widetilde{\nu},\widetilde{\nu})$ coincides with the curvature $\kappa$ of the curve $\c$ at the intersection point.

A capillary surface  is said to be {\it stable} if   $\e''(0)\geq 0$  for all compactly supported volume-preserving admissible variations.    From the expression of the second variation $\e''(0)$ in \eqref{eq1},  a quadratic form $Q$  on  the space ${\mathcal V}=\{u\in C^\infty(\Sigma):\int_\Sigma u\, d\Sigma=0\}$ is  defined by
\begin{equation}\label{equa}
Q[u]=\int_\Sigma\left(\lvert\nabla u\rvert^2-\lvert A\rvert^2u^2\right) \, d\Sigma-\int_{\partial \Sigma}\q u^2\, ds,
\end{equation}
where $\nabla$ denotes   the gradient on $\Sigma$. Thus,  $\Sigma$ is stable  if and only if $Q[u]\geq 0$ for all $u\in{\mathcal V}$. The quadratic form $Q$ defines the so-called {\it Jacobi operator} $\mathcal{L}$ given by
\begin{equation}\label{l1}
\mathcal{L}=\Delta+\lvert A\rvert^2.
\end{equation}
Recall that $|A|^2=4H^2-2K$,  where $K$ the Gauss curvature of $\Sigma$.  

If $\Sigma$ is not stable, it is interesting to study the subspace of ${\mathcal V}$ where $Q$ is negative definite. The  {\it weak Morse index} of $\Sigma$, denoted by $\mbox{index}_w(\Sigma)$, is defined as the maximum dimension of any subspace of ${\mathcal V}$ on which $Q$ is negative definite.    If $\Sigma$ is compact, the weak Morse index of $\Sigma$ coincides with the number of negative eigenvalues $\lambda_w$ of the   eigenvalue problem     
\begin{equation}\label{eq10}
\left\{\begin{split}
\mathcal{L}u+\lambda_w u=0&\mbox{ in}\ \Sigma,\\
\frac{\partial u}{\partial \nu} - \q u=0& \mbox{ in}\  \partial\Sigma,\\
u&\in{\mathcal V}
\end{split}\right.
\end{equation}
Since the condition $u\in  {\mathcal V}$ is difficult to work with,  instead of \eqref{eq10}, we consider the eigenvalue problem
\begin{equation}\label{p-e}
\left\{\begin{split}
\mathcal{L}u+\lambda u=0&\mbox{ in}\ \Sigma,\\
\frac{\partial u}{\partial \nu} - \q u=0& \mbox{ in}\  \partial\Sigma,\\
u& \in C^\infty(\Sigma)
\end{split}\right.
\end{equation}
By the ellipticity of $\mathcal{L}$, it is well-know that the eigenvalues of \eqref{p-e} (also $\lambda_w$ of \eqref{eq10}) are ordered as a discrete spectrum  $\lambda_1<\lambda_2 \leq\lambda_3\cdots\nearrow \infty$ counting multiplicity.   The {\it Morse index} of $\Sigma$, denoted $\mbox{index}(\Sigma)$, is the number of negative eigenvalues of \eqref{p-e}. If $\mbox{index}(\Sigma)=0$, we say that the surface is {\it strongly stable} and this is equivalent to $Q[u]\geq 0$ for all $u\in C^\infty(\Sigma)$ regardless the condition that $\int_\Sigma u=0$. Both indices are related by the inequalities   
\begin{equation}\label{ii}
\mbox{index}_w(\Sigma)\leq \mbox{index}(\Sigma)\leq \mbox{index}_w(\Sigma)+1.
\end{equation}
For example, if $\lambda_1\geq 0$, then    the surface is stable because  $\mbox{index}(\Sigma)=0$. This means that strong stability implies stability. If   $\lambda_2<0$, then  the subspace spanned by  two eigenfunctions of $\lambda_1$ and $\lambda_2$ has at least dimension $2$. This allows to construct   a function $u\in\mathcal{V}$ with $Q[u]<0$, proving that     $\Sigma$ is not stable.  For   a relation between the weak Morse index and the Morse index, we refer the reader to \cite{ko,vo2,vo12}.

 In case that $\Sigma$ is not compact, then we need to take an exhaustion of the surface.  If $\Sigma_1\subset\Sigma_2\subset\ldots\subset  \Sigma$ is an exhaustion of $\Sigma$ by bounded subdomains, the weak Morse index and the Morse index of $\Sigma$ are defined by 
 \begin{equation}\label{stable-infinito}
 \mbox{index\,}_w(\Sigma)=\lim_{n\to\infty}\mbox{index\,}_w(\Sigma_n),\quad \mbox{index\,}(\Sigma)=\lim_{n\to\infty}\mbox{index\,}(\Sigma_n).
 \end{equation}
These definitions  are independent of the choice of the exhaustion of $\Sigma$. Both numbers can be infinite, but if they are finite, then the relation \eqref{ii} holds too. We point out that   for two bounded open domains $\Sigma'$, $\Sigma''$, if $\Sigma'\subset\Sigma''$, then $\mbox{index\,}_w(\Sigma')\geq \mbox{index\,}_w(\Sigma'')$ and $\mbox{index\,}(\Sigma')\geq \mbox{index\,}(\Sigma'')$.

\section{Stability of  horizontal planes}\label{sec3}

 In this section we study the stability of planar strips as  capillary surfaces on $\s(\c)$. Stability depends on the curvature of the support surface at the intersection points with the plane. If the curvature $\kappa$ is positive, then planar strips  will be unstable and we will obtain the critical length of the strip that determines the  Plateau-Rayleigh  instability phenomenon. If $\kappa$ is non-positive, then the  planar strip will be stable. See Thm.  \ref{t1} below.   
 
  Let  $\Pi$ be a horizontal plane of equation $z=c$ and suppose that $\Pi$ intersects $\s(\c)$.   By the symmetry of $\s$,  this intersection  is determined by the intersection of $\c$ with the horizontal line $\R(s)=(s,0,c)$. In order to distinguish the parameter of $\c$, we will denote $\c=\c(\tau)$, $\tau\in I$, where we also assume that the domain $I$ of $\c$ is symmetric about $0\in I$. Suppose that this intersection occurs only at  the points $s=\pm s_0$, for some $s_0>0$, with $\R(s_0)=\c(\tau_0)$ for some $\tau_0$. Thus $\Pi$ and $\s(\c)$ determine a strip  in $\Pi$. The boundary of this strip is formed by the two parallel horizontal straight-lines through $\R(\pm s_0)$. These lines are   $L_{s_0}^+=\{x= s_0, z=c\}$ and $L_{s_0}^{-}=\{x=-s_0,z=c\}$.  This strip is the capillary surface $\Sigma$ to investigate.

\begin{figure}[hbtp]
\begin{center}\scalebox{1}{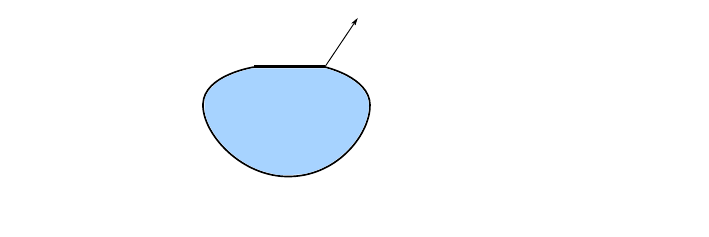 }
\end{center}
\caption{A capillary planar strip on $\s(\c)$. The different types are: $\widetilde{A}>0$ and $\gamma\in (0,\frac{\pi}{2})$ (a);  $\widetilde{A}>0$ and $\gamma\in ( \frac{\pi}{2},\pi)$ (b); $\widetilde{A}<0$ and $\gamma\in (0,\frac{\pi}{2})$ (c);  $\widetilde{A}<0$ and $\gamma\in (\frac{\pi}{2},\pi)$ (d).  }\label{fig3}
\end{figure}

Without loss of generality, we can assume that the fluid $\Omega$ lies below $\Pi$: if $\Omega$ lies above $\Pi$, the arguments are analogous but the contact angle is supplementary.  The orientation on $\Pi$ is $N=(0,0,-1)$.  Since $\s(\c)$ is a cylindrical surface, then $\q=\widetilde{A}(\widetilde{\nu},\widetilde{\nu})$ is the curvature $\kappa$ of $\c$ with the corresponding sign because $\widetilde{A}$ is computed according to the orientation $-\widetilde{N}$.  Let   $\widetilde{A}(\widetilde{\nu},\widetilde{\nu}) =\kappa(\tau_0)$, where   $\kappa(\tau_0)$ denotes the curvature of $\c$ at the intersection point with $\c(\tau_0)$. On the other hand,  the strip $\Sigma$ is bounded by $-s_0\leq x\leq s_0$. Thus  $\nu(\tau_0)=(1,0,0)$ and $\nu(-\tau_0)=(-1,0,0)$.  

Since we need  to consider compact perturbations of the surface, we will take rectangular pieces of $\Sigma$ situated between the vertical planes of equations $y=0$ and $y=h$.  We will calculate the eigenvalues associated with the eigenvalue problem  \eqref{p-e}. Let us parametrize $\Sigma$  by  $x=s$, $y=t$ and $z=c$. Notice that  $\Delta$ coincides with the Euclidean Laplacian $\Delta u=u_{ss}+u_{tt}$. We also have 
$$\frac{\partial u}{\partial\nu}(s_0)=u_s(s_0),\quad \frac{\partial u}{\partial\nu}(-s_0)=-u_s(s_0).$$
Because $A=0$, we have  $\q=  \kappa/\sin\gamma$. 
 Then the eigenvalue problem  \eqref{p-e}   modified by adding the boundary condition on $y=0$ and $y=h$ is
 \begin{equation}\label{ei1}
\left.
\begin{aligned}
u_{ss}+u_{tt}+\lambda u&=0,\\
u_s(s_0,t)-\frac{\kappa(\tau_0)}{\sin\gamma} u(s_0,t)&=0,\\
u_s(-s_0,t)+\frac{\kappa(\tau_0)}{\sin\gamma}u(-s_0,t)&=0,\\
u(s,0)=u(s,h)&=0.
\end{aligned}
\right\}
\end{equation}
We are using that $\kappa(\tau_0)=\kappa(-\tau_0)$ by the symmetry of $\c$. 

\begin{remark}
{\rm If $\c$ is  the graph of a function $\phi=\phi(x)$, the expression of $\kappa$ is 
\begin{equation}\label{k}
\kappa=\pm\frac{\phi''}{(1+\phi'^2)^{3/2}},
\end{equation}
 where the sign of $\kappa$ coincides with that of  $\widetilde{A}$. We also have     $N=(0,0,-1)$ and  $\widetilde{N}=\pm\frac{1}{\sqrt{1+\phi^2}}(-\phi',0,1)$. Thus $\sin\gamma= \lvert\phi'\rvert/\sqrt{1+\phi'^2} $ and 
\begin{equation}\label{kk}
\frac{\kappa}{\sin\gamma}=\frac{\phi''}{\lvert\phi'(s)\rvert(1+\phi'^2)}.
\end{equation}}
\end{remark}

We solve \eqref{ei1}   by the method of separation of variables. The following computations can be considered as elementary calculations. However, in order to give rigor to the arguments, we have included them in the article despite the fact that they seem cumbersome and tedious.   By the last boundary condition in \eqref{ei1}, a general solution can be expressed as 
$$u(t,s)=\sum_{n=1}^\infty f_n(s)\sin\left(\frac{n\pi t}{h}\right),\quad s\in [-s_0,s_0], t\in [0,h],$$
where $f_n$ are   functions defined in $[-s_0,s_0]$. In order to simplify the notation, we will drop  the subscript $n$ for $f_n$.  The first three equations in \eqref{ei1} are equivalent to
\begin{equation}\label{b1}
\left.
\begin{aligned}
f''(s)-\left(\frac{n^2\pi^2}{h^2}-\lambda\right)f(s)&=0,\\
f'(s_0)-\frac{\kappa(\tau_0)}{\sin\gamma} f(s_0)&=0,\\
f'(-s_0)+\frac{\kappa(\tau_0)}{\sin\gamma}f(-s_0)&=0.
\end{aligned}
\right\}
\end{equation}
  The solution of the first equation in \eqref{b1} depends on   the sign of $\frac{n^2\pi^2}{h^2}-\lambda$. 
  \begin{enumerate}
\item Case $\frac{n^2\pi^2}{h^2}-\lambda>0$.   Let $\beta>0$ be the number defined by
$$\beta^2=\frac{n^2\pi^2}{h^2}-\lambda.$$
The solution of the first equation of \eqref{b1} is
$$f(s)=Ae^{\beta s}+Be^{-\beta s},$$
 for some constants $A$ and $B$.  Note that the solution $f(s)$ can be also expressed  as a linear combination of the hyperbolic trigonometric functions $\sinh(\beta s)$ and $\cosh(\beta s)$.  Imposing the boundary conditions \eqref{b1}, we have 
\begin{equation*}
\begin{split}
Ae^{\beta s_0}\left(\beta-\frac{\kappa(\tau_0)}{\sin\gamma}\right)+Be^{-\beta s_0}\left(-\beta-\frac{\kappa(\tau_0)}{\sin\gamma}\right)&=0,\\
Ae^{-\beta s_0}\left(\beta+\frac{\kappa(\tau_0)}{\sin\gamma}\right)+Be^{\beta s_0}\left(-\beta+\frac{\kappa(\tau_0)}{\sin\gamma}\right)&=0,
\end{split}
\end{equation*}
There is a non-trivial solution of this system if and only if the determinant of the coefficients $A$ and $B$ vanishes, that is:
$$e^{4\beta s_0}\left(\beta-\frac{\kappa(\tau_0)}{\sin\gamma}\right)\left(-\beta+\frac{\kappa(\tau_0)}{\sin\gamma}\right)=
\left(-\beta-\frac{\kappa(\tau_0)}{\sin\gamma}\right)\left(\beta+\frac{\kappa(\tau_0)}{\sin\gamma}\right),$$
or equivalently,
\begin{equation}\label{4t}
e^{4\beta s_0}=\left(\frac{\beta\sin\gamma+\kappa(\tau_0)}{\beta\sin\gamma-\kappa(\tau_0)}\right)^2.
\end{equation}
Define the function 
$$p(\beta)= e^{4\beta s_0}-\left(\frac{\beta\sin\gamma+\kappa(\tau_0)}{\beta\sin\gamma-\kappa(\tau_0)}\right)^2.$$
\begin{enumerate}
\item Case $\kappa(\tau_0)>0$.  Let $T= \kappa(\tau_0)/\sin\gamma$. Since $T>0$,  the domain of $h$ is $(0,T)\cup (T,\infty)$.  A study of the function $p$ proves that  $p$ is monotonic in each subdomain, being $p$ decreasing in $(0,T)$ and  increasing in $(T,\infty)$. Since $p(0)=0$, then $p$ is negative in $(0,T)$. In the interval $(T,\infty)$, we have $\lim_{\beta\to T^{+}}p(\beta)=-\infty$ and $\lim_{\beta\to\infty}p(\beta)=\infty$. Thus there is a unique $\beta\in (T,\infty)$ such that $p(\beta)=0$. For this value $\beta$,   the eigenvalues are
$$\lambda_n=\frac{n^2\pi^2}{h^2}-\beta^2,\quad n\in\mathbb{N}.$$
For large values   $h$ of the separation between the vertical walls $y=0$ and $y=h$, we obtain many negative eigenvalues, proving that $\Sigma$ is unstable.  

 \item Case $\kappa(\tau_0)\leq0$.   The function $p(\beta)$ is defined in $(0,\infty)$. Depending on the values $\kappa(\tau_0)$, $\sin\gamma$ and $s_0$, the function $h$ is either monotonically increasing or with a unique minimum $\beta_{min}$. In the first case, since $p(0)>0$ we have  that $p$ is positive; in the second one, $p$ has a minimum   $\beta_{m}$ with $p(\beta_{m})>0$. In both cases  the function $p$ is positive and   consequently, there is no  solution of $p(\beta)=0$. This implies that there are no solutions of \eqref{b1}.   
\end{enumerate}
\item Case $\frac{n^2\pi^2}{h^2}-\lambda=0$.  
The solution of the first equation of \eqref{b1} is $f(s)=A +Bs$ for some constants $A$ and $B$. The boundary conditions in \eqref{b1} imply
\begin{equation*}
\begin{split}
\kappa(\tau_0) A-(\sin\gamma-s_0\kappa(\tau_0))B&=0,\\
\kappa(\tau_0) A+(\sin\gamma-s_0\kappa(\tau_0))B&=0.
\end{split}
\end{equation*}
There are non-trivial solutions $A$ and $B$  of this system  if and only if $\kappa(\tau_0)(\sin\gamma-s_0\kappa(\tau_0))=0$. 
\begin{enumerate}
\item Case $\kappa(\tau_0)=0$.  Then $\sin\gamma-s_0\kappa(\tau_0)\not=0$, $B=0$ and $f(s)=A$. 
\item Case $s_0\kappa(\tau_0)=\sin\gamma$. Then $\kappa(\tau_0)\not=0$, $A=0$ and $f(s)=Bs$.
\end{enumerate}
In both cases, the eigenvalues are $\lambda_n=\frac{n^2\pi^2}{h^2}$, $n\in\mathbb{N}$ and  all are positive.
\item Case $\frac{n^2\pi^2}{h^2}-\lambda<0$. Let $\beta>0$, where
$$\beta^2=-\frac{n^2\pi^2}{h^2}+\lambda.$$
The solution of the first equation of \eqref{b1} is
$$f(s)=A\cos(\beta s)+B\sin(\beta s),$$
 for some constants $A$ and $B$.    The boundary conditions \eqref{b1} yield 
 \begin{equation*}
\begin{split}
\left(\beta\sin(\beta s_0)+\frac{\kappa(\tau_0)}{\sin\gamma}\cos(\beta s_0)\right)A-\left(\beta\cos(\beta s_0)-\frac{\kappa(\tau_0)}{\sin\gamma}\sin(\beta s_0)\right)B&=0,\\
\left(\beta\sin(\beta s_0)+\frac{\kappa(\tau_0)}{\sin\gamma}\cos(\beta s_0)\right)A+\left(\beta\cos(\beta s_0)-\frac{\kappa(\tau_0)}{\sin\gamma}\sin(\beta s_0)\right)B&=0.
\end{split}
\end{equation*}
There are non-trivial solutions $A$ and $B$ if and only if 
\begin{equation}\label{c-tan}
\beta\sin(\beta s_0)+\frac{\kappa(\tau_0)}{\sin\gamma}\cos(\beta s_0)=0,\quad\mbox{ or }\quad\beta\cos(\beta s_0)-\frac{\kappa(\tau_0)}{\sin\gamma}\sin(\beta s_0),
\end{equation}
or equivalently, if 
$$\tan(\beta s_0)=-\frac{\kappa(\tau_0)}{\beta\sin\gamma},\quad\mbox{or}\quad \tan(\beta s_0)= \frac{\beta\sin\gamma}{\kappa(\tau_0)}.$$
  The function $\beta\mapsto\tan(\beta s_0)$ is increasing and defined in $(0,\infty)\setminus\{m_k:k\in\mathbb{N}\}$, where $m_k=\frac{(2k-1)\pi}{2 s_0}$, $k\in\mathbb{N}$. Since $\lim_{\beta\to (\frac{(2k-1)\pi}{2 s_0})^{\pm}}\tan(\beta s_0)=\mp\infty$ and the functions $\beta\mapsto 1/\beta$ and $\beta\mapsto \beta$  are monotonic, there is a unique solution $\beta_{k+1}$ of each one of the equations \eqref{c-tan} in each interval of type $I_k:=(m_k,m_{k+1})$. For each  $\beta_k$, we have 
$\lambda_{n,k}=\beta_k^2+\frac{n^2\pi^2}{h^2}>0$, $n\in\mathbb{N}$. Then all the eigenvalues $\lambda_{n,k}$ are positive.  
\end{enumerate}
 
 We summarize the above calculations. 

\begin{theorem} \label{t1}
Let $\s(\c)$ be a symmetric support and let $\Sigma$ be a planar strip  which it is  a capillary surface on $\s(\c)$ along $L_{s_0}^{+}\cup L_{s_0}^{-}$. 
\begin{enumerate}
\item If  $\kappa(\tau_0)>0$, then $\Sigma$ is unstable. In fact,  if the length $h$ of a rectangular piece of $\Sigma$ is greater than $h_0= 2\pi/\beta$, where $\beta$ is the solution of \eqref{4t}, then $\Sigma$ is unstable (Fig. \ref{fig3}, (a) and (b)). 
\item If $\kappa(\tau_0)\leq 0$, then $\Sigma$ is strongly stable (Fig. \ref{fig3}, (c) and (d)).
\end{enumerate}
\end{theorem}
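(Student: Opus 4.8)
The plan is to read the conclusion off the spectral analysis already performed on the reduced problem \eqref{b1}, so the remaining work is bookkeeping rather than new computation. The one structural observation to make at the outset is that the negative eigenvalues of \eqref{p-e}, restricted to a rectangular piece $[-s_0,s_0]\times[0,h]$, can only originate from a single branch. Indeed, in Case (2) the eigenvalues are $\lambda_n=n^2\pi^2/h^2$ and in Case (3) they are $\lambda_{n,k}=\beta_k^2+n^2\pi^2/h^2$, both manifestly positive; hence any eigenvalue that is $\leq 0$ must come from Case (1), where $\lambda=n^2\pi^2/h^2-\beta^2$ with $\beta>0$ forced to satisfy the compatibility relation \eqref{4t}. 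I would therefore organize the proof around whether \eqref{4t} admits a positive root, which is exactly what the sign of $\kappa(\tau_0)$ controls.

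For part (1), assume $\kappa(\tau_0)>0$. The monotonicity study of $p$ in Case (1)(a) produces a unique root $\beta\in(T,\infty)$ of \eqref{4t}, independent of $h$ and $n$, and the corresponding eigenvalues of the rectangular piece are $\lambda_n=n^2\pi^2/h^2-\beta^2$. Since $\lambda_n<0$ precisely when $n<h\beta/\pi$, the piece possesses at least the two negative eigenvalues $\lambda_1,\lambda_2$ as soon as $2<h\beta/\pi$, that is, $h>h_0=2\pi/\beta$; thus $\mbox{index}(\Sigma)\geq 2$ for such a piece. I would then invoke \eqref{ii} in the form $\mbox{index}_w(\Sigma)\geq\mbox{index}(\Sigma)-1\geq 1$: taking eigenfunctions $u_1,u_2$ for $\lambda_1,\lambda_2$ and choosing a nontrivial combination $u=a\,u_1+b\,u_2$ obeying the single linear constraint $\int_\Sigma u\,d\Sigma=0$, orthogonality (the $t$-factors $\sin(\pi t/h)$ and $\sin(2\pi t/h)$ are $L^2$-orthogonal) gives $Q[u]=a^2\lambda_1\|u_1\|^2+b^2\lambda_2\|u_2\|^2<0$. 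Extending $u$ by zero produces a compactly supported, volume-preserving admissible variation that lowers the energy, so the infinite strip is unstable.

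For part (2), assume $\kappa(\tau_0)\leq 0$. Case (1)(b) shows $p(\beta)>0$ for every $\beta>0$, so \eqref{4t} has no root and Case (1) contributes no eigenvalue at all; combined with the positivity in Cases (2) and (3) this forces every eigenvalue of \eqref{p-e} to be positive on every rectangular piece, whence $\mbox{index}(\Sigma_n)=0$ for each member of an exhaustion. By the definition \eqref{stable-infinito} the infinite strip then has $\mbox{index}(\Sigma)=0$, which is exactly strong stability (and, by the implication $\lambda_1\geq 0\Rightarrow$ stability noted after \eqref{ii}, the strongest conclusion available).

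The step I expect to require the most care is the passage from \emph{not strongly stable} to \emph{unstable}. A single negative eigenvalue only yields $\mbox{index}(\Sigma)\geq 1$, which by \eqref{ii} is compatible with $\mbox{index}_w(\Sigma)=0$, i.e. with stability once the volume constraint is imposed, since that one negative direction may be absorbed by $\int_\Sigma u\,d\Sigma=0$. It is the insistence on \emph{two} negative eigenvalues that makes the argument valid, and this is precisely what pins the critical length at $h_0=2\pi/\beta$ rather than $\pi/\beta$. The remaining routine point is to confirm that the root $\beta$ of \eqref{4t} is genuinely $h$-independent, so that enlarging $h$ only increases the count of negative eigenvalues and the conclusion transfers cleanly to the infinite strip through the exhaustion in \eqref{stable-infinito}.
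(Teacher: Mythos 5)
Your proposal is correct and follows essentially the same route as the paper: the spectral case analysis preceding the theorem shows negative eigenvalues arise only from the branch governed by \eqref{4t}, the critical length is pinned by $\lambda_2=0$ giving $h_0=2\pi/\beta$, and instability follows from having two negative eigenvalues via the relation \eqref{ii} (a point the paper establishes as a general principle after \eqref{ii} rather than inside the proof). Your part (2) likewise matches the paper's conclusion that the absence of roots of \eqref{4t} forces all eigenvalues positive, hence strong stability.
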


\begin{proof} It only remains how to calculate $h_0$ of the item (1). The critical value $h_0$ appears when the first eigenvalue $\lambda_1$ is negative and the second one is just $0$. Doing $\lambda_2=0$, we obtain $h_0=2\pi/\beta$.  
\end{proof}

 The   statement (1) of Thm.  \ref{t1}  is the version of the Plateau-Rayleigh instability criterion adapted to our context. 
Let us also  observe that (2) of Thm.  \ref{t1}  can be directly deduced from the expression of $Q$ in \eqref{equa}, because $A=0$ and the value of $\q$ along $\partial\Sigma$ is  $\q=\csc\gamma\kappa(\tau_0)\leq 0$ if $\kappa(\tau_0)\leq 0$. This gives $Q[u]\geq 0$ for all $u\in\mathcal{V}$.

We show examples of Thm.  \ref{t1} when the support surfaces are circular cylinders and parabolic cylinders.

\begin{example}\label{ex3}{\rm    $\s(\c)$ is a circular cylinder (see also \cite{vo4}). We  assume that $\Sigma$ is the planar strip  situated at height $z=0$ given by $-x_0\leq x\leq x_0$. The support is the  circular cylinder of radius $r$ where $\c$ is the piece of a circle of radius $r$ that intersect the $x$-axis at $\pm x_0$ and whose center is $(0,0,\sqrt{r^2-x_0^2})$. Then $\sin\gamma=x_0/r$ and   \eqref{4t} is 
$$e^{4\beta x_0}=\left(\frac{ \beta x_0+1}{\beta x_0-1}\right)^2.$$
The solution of this equation is $\beta= 1.2/x_0$. Hence the value $h_0$ is 
$$h_0=\frac{2\pi}{1.2} x_0=5.23\, x_0.$$
}
\end{example}

\begin{example}\label{ex4} {\rm   $\s(\c)$ is a parabolic cylinder.   Let $\c$ be the parabola   $z(x)=x^2$. Suppose that $\Sigma$ is the strip of the plane at height $z=1$, with $-1\leq x\leq 1$.  Using \eqref{kk}, 
$$\frac{\kappa(x)}{\sin\gamma}=\frac{1}{x(1+4x^2)}.$$
Then $s_0=\tau_0=1$ and   the solution of \eqref{4t}  is $\beta= 0.462$ and thus $h_0= 13.58$. }
\end{example}

\section{Stability of  circular cylinders}\label{sec4}

Let  $\Sigma$ be a circular cylinder whose axis is parallel to the $y$-axis and contained in the $yz$-plane. Suppose that the circle $\R$ generating $\Sigma$ intersects  the curve $\c$ exactly at two points.  Both points determine two sections of circular cylinders which are capillary surfaces on $\s(\c)$. For convenience in the symmetry of the domain of the circle $\R$ that determines $\Sigma$, suppose that $\R$  is parametrized by $\R(s)= (r\sin(s),0,m+r\cos(s))$. Then the sections of the circular cylinder are given by the intervals   $[-s_0,s_0]$ and $[s_0,2\pi-s_0]$ in the $s$-domain of $\R$.  As a parametrization of $\Sigma$, we take 
\begin{equation}\label{pc}
X(t,s)=\R(s)+t(0,1,0),\quad s\in [-s_0,s_0], t\in\r.
\end{equation} 

Without loss of generality, we can assume that the liquid $\Omega$ lies below  the cylinder $\Sigma$  such as it is shown in  Figs. \ref{fig4} and \ref{fig5}. We will assume that the intersection between $\s(\c)$ and $\Sigma$ occurs at the parallel horizontal straight-lines  $L_{s_0}\cup L_{s_0}^{-}$ determined by the intersection points $\R(\pm s_0)=\c(\pm\tau_0)$. The normal curvature   $A(\nu,\nu)$ on $\Sigma$ in the direction of $\nu$ is the curvature of the circle $\R$. Then $A(\nu,\nu)$ is $-1/r$ or $1/r$ depending if $\Sigma$ is concave (Fig. \ref{fig4}) or convex (Fig. \ref{fig5}). Analogously,  the normal curvature $\widetilde{A}(\widetilde{\nu},\widetilde{\nu})$ of the cross-section of $\s(\c)$ is the curvature $\kappa$ of the curve  $\c$ and its sign depends on the orientation of $\s(\c)$ according to $-\widetilde{N}$.

 \begin{figure}[hbtp]
\begin{center}\scalebox{1}{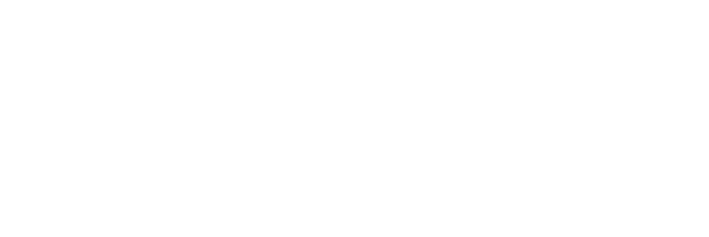 }
\end{center}
\caption{Case $A(\nu,\nu)=-1/r$. The different types are: $\widetilde{A}>0$ and $\gamma\in (0,\frac{\pi}{2})$ (a);  $\widetilde{A}>0$ and $\gamma\in ( \frac{\pi}{2},\pi)$ (b); $\widetilde{A}<0$ and $\gamma\in (0,\frac{\pi}{2})$ (c);  $\widetilde{A}<0$ and $\gamma\in (\frac{\pi}{2},\pi)$ (d).   }\label{fig4}
\end{figure}

 \begin{figure}[hbtp]
\begin{center}\scalebox{1}{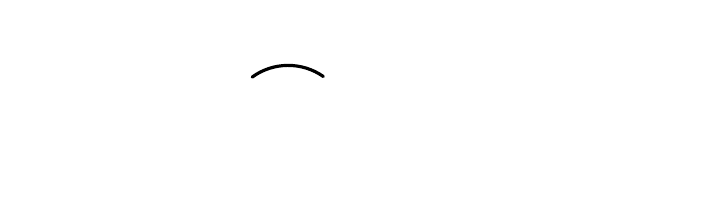 }
\end{center}
\caption{Case $A(\nu,\nu)=1/r$. The different types are: $\widetilde{A}>0$ and $\gamma\in (0,\frac{\pi}{2})$ (a);  $\widetilde{A}>0$ and $\gamma\in ( \frac{\pi}{2},\pi)$ (b); $\widetilde{A}<0$ and $\gamma\in (0,\frac{\pi}{2})$ (c);  $\widetilde{A}<0$ and $\gamma\in (\frac{\pi}{2},\pi)$ (d). }\label{fig5}
\end{figure}

From \eqref{qq}, the function $\q$ is 
\begin{equation}\label{q2}
\q=\frac{\kappa(\tau_0)}{\sin\gamma}+\delta \frac{\cot\gamma}{r}=\frac{r\kappa(\tau_0)+\delta\cos\gamma}{r\sin\gamma},\quad 
\delta=\left\{\begin{array}{ll}1,& \mbox{if }A(\nu,\nu)=\frac{1}{r}\\
-1,&\mbox{if }A(\nu,\nu)=-\frac{1}{r}.\end{array}\right.
\end{equation}
The Laplacian operator  written in $(t,s)$-coordinates is $\Delta=\frac{1}{r^2}\partial_{ss}+\partial_{tt}$. Also, $\lvert A\rvert^2=1/r^2$ because $K=0$ and $H=\pm 1/(2r)$ in a cylinder of radius $r$. With respect to the parametrization $X$ defined in \eqref{xx}, we have  $\nabla u=\frac{1}{r^2}u_sX_s+u_t X_t$. Since $\nu=\pm X_s/r$ at the boundary points $s=\pm s_0$, we have   
\begin{eqnarray*}
\frac{\partial u}{\partial\nu}&=\dfrac{u_s}{r},& \mbox{at } s=s_0\\
\frac{\partial u}{\partial\nu}&=-\dfrac{u_s}{r},& \mbox{at }s=-s_0.
\end{eqnarray*}
The eigenvalue problem \eqref{p-e}, with the added condition $u=0$ at $y=0$ and $y=h$ is 
\begin{equation}\label{ei2}
\left.
\begin{aligned}
\frac{1}{r^2}u_{ss}+u_{tt}+(\lambda+\frac{1}{r^2} ) u&=0,\\
u_s(s_0,t)-\frac{r\kappa(\tau_0)+\delta\cos\gamma}{ \sin\gamma}  u(s_0,t)&=0,\\
u_s(-s_0,t)+\frac{r\kappa(\tau_0)+\delta\cos\gamma}{\sin\gamma} u(-s_0,t)&=0,\\
u(s,0)=u(s,h)&=0.
\end{aligned}
\right\}
\end{equation}
 The solutions of \eqref{ei2} are obtained again by the method of separation of variables. Here we follow the same steps discussed in  the system \eqref{ei1} and some parts have been avoided. A general solution $u$ can be expressed as 
$$u(t,s)=\sum_{n=1}^\infty f_n(s)\sin\left(\frac{n\pi t}{h}\right),\quad s\in [-s_0,s_0], t\in [0,h].$$
In order to simplify the notation,   the subscript $n$ for $f_n$ is dropped and  let introduce the notation
 \begin{equation}\label{mm}
 \mu(\tau)=r\kappa(\tau)+\delta\cos\gamma.
 \end{equation}
The first three equations in \eqref{ei2} are 
\begin{equation}\label{b2}
\left.
\begin{aligned}
f''(s) -\left( \frac{r^2n^2\pi^2}{h^2}-\lambda r^2-1\right)f(s)&=0,\\
f'(s_0)-\frac{\mu(\tau_0)}{ \sin\gamma} f(s_0)&=0,\\
f'(-a)+\frac{\mu(\tau_0)}{ \sin\gamma}f(-s_0)&=0.
\end{aligned}
\right\}
\end{equation}
The discussion of solutions is according to the sign of the parenthesis in the first equation of \eqref{b2}. 
\begin{enumerate}
\item Case $ \frac{r^2n^2\pi^2}{h^2}-\lambda r^2-1>0$. Let $\beta>0$, where 
\begin{equation}\label{be}
 \beta^2= \frac{r^2n^2\pi^2}{h^2}-\lambda r^2-1.
 \end{equation}
The general solution of the first equation of \eqref{b2} is $f(s)=Ae^{\beta s}+Be^{-\beta s}$, 
 for some constants $A$ and $B$.  From the boundary conditions \eqref{b2}, we deduce that there are   non-trivial solutions $A$ and $B$ if and only if   
\begin{equation}\label{4t2}
e^{4\beta s_0}=\left(\frac{\beta\sin\gamma + \mu(\tau_0)}{\beta\sin\gamma- \mu(\tau_0) }\right)^2.
\end{equation}
We distinguish two cases. 
\begin{enumerate}
\item Case $\mu(\tau_0)>0$. Let $T= \mu(\tau_0)/\sin\gamma$. Since $\mu(\tau_0)$ is positive,   there is a unique solution $\beta$ of \eqref{4t2}, which belongs to the interval $(T,\infty)$. For this value of $\beta$,  and from \eqref{be},  the eigenvalues are
\begin{equation}\label{be0}
\lambda_n=\frac{n^2\pi^2}{h^2}-\frac{1+\beta^2}{ r^2},\quad n\in\mathbb{N}.
\end{equation}
In particular, if $h$ is sufficiently big, there exist many negative eigenvalues and, consequently, $\Sigma$ is unstable.

 \item Case $\mu(\tau_0)\leq0$. A similar argument as in Sect. \ref{sec3}, there is no  solution of \eqref{4t2}.  
 \end{enumerate}
\item Case $\frac{r^2n^2\pi^2}{h^2}-\lambda r^2-1=0$. The solution of the first equation of \eqref{b2} is $f(s)=A+Bs$, where $A,B\in\r$. The boundary conditions imply that there are non-trivial solutions $A$ and $B$ if and only if
\begin{equation}\label{e43}
\mu(\tau_0)(\sin\gamma-s_0\mu(\tau_0))=0.
\end{equation}
\begin{enumerate}
\item Case $\mu(\tau_0)=0$. Then $B=0$ and   $f(s)=A$.  
\item Case $\mu(\tau_0)\not=0$. Then $s_0= \sin\gamma/\mu(\tau_0)$, in particular, $\mu(\tau_0)>0$. Hence,  $A=0$ and   $f(s)=Bs$. 
\end{enumerate}
In both cases, the eigenvalues are $\lambda_n=-\frac{1}{r^2}+\frac{n^2\pi^2}{h^2}$, $  n\in\mathbb{N}$. Therefore,  $\Sigma$ is unstable if $h$ is sufficiently large.
\item Case $\frac{r^2n^2\pi^2}{h^2}-\lambda r^2-1<0$. Let $\beta>0$ with
$$ \beta^2= -\frac{r^2n^2\pi^2}{h^2}+\lambda r^2+1.$$
The general solution of the first equation of \eqref{b2} is
$$f(s)=A \cos(\beta s)+B\sin(\beta s).$$
 The boundary conditions \eqref{b1} imply that there are non-trivial solutions $A$ and $B$  if and only if 
\begin{equation}\label{4t31}
\tan(\beta s_0)=-\frac{ \mu(\tau_0)}{\beta\sin\gamma},
\end{equation}
or
\begin{equation}\label{4t32} \tan(\beta s_0)=\frac{\beta\sin\gamma}{\mu(\tau_0)}.
\end{equation}
Independently on the sign of $\mu(\tau_0)$, there are   infinitely many solutions of both equations, each one in the intervals of type $I_k$ for every $k\geq 1$. The solution in this interval will be denoted by $\beta_{1,k+1}$ for \eqref{4t31} and $\beta_{2,k+1}$ for \eqref{4t32}. In the interval $I_0=(0,\frac{\pi}{2s_0})$, and only when $\mu(\tau_0)>0$, the root $\beta_{2,1}$ may exist or not. Indeed,  the existence of $\beta_{2,1}$ is assured if and only if $s_0<\sin\gamma/\mu(\tau_0)$. The eigenvalues are 
\begin{equation}\label{be2}
\lambda_{n,k}^j=\frac{n^2\pi^2}{h^2}+\frac{\beta_{j,k}^2-1}{r^2}\quad n,k\in\mathbb{N}, j\in\{1,2\}.
\end{equation}
Since we are studying the stability of $\Sigma$, and because  the cases 1 and 2 prove instability when $\mu(\tau_0)\geq 0$, we only  need to discuss the case $\mu(\tau_0)<0$.  Consider   equation \eqref{4t31}. The function $\beta\mapsto  \tan(\beta s_0)$ defined in $(0,\pi/(2s_0))$ is monotonically increasing from $0$ to $\infty$. The function $\beta\mapsto 1/\beta$ is monotonically decreasing from $\infty $ to $0$ in its domain $(0,\infty)$. Since $\mu(\tau_0)<0$, the first root of   equations \eqref{4t31} and \eqref{4t32} appears by solving \eqref{4t31}, being this root the value  $\beta_{1,1}\in (0,\frac{\pi}{2s_0})$. If $\beta_{1,1}<1$, then \eqref{be2} gives many negative eigenvalues $\lambda_{n,1}^1$ for large values of $h$.  Otherwise, all eigenvalues are nonnegative and the surface is strongly stable because its Morse index is $0$.
  \end{enumerate}
  
  We summarize the above computations in the following theorem.

\begin{theorem}\label{t2}
Let $\s(\c)$ be a symmetric support and let $\Sigma$ be a section of a circular cylinder of radius $r>0$, which it is  a capillary surface on $\s(\c)$ along $L_{s_0}^+\cup L_{s_0}^{-}$. \begin{enumerate}
\item If $\mu(\tau_0)\geq 0$, then $\Sigma$ is unstable.  
\item Suppose $\mu(\tau_0)<0$. If   $\beta_{1,1}<1$, then $\Sigma$ is unstable. Otherwise, $\Sigma$ is  strongly stable.   
\end{enumerate}
\end{theorem}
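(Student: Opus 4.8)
The plan is to read the stability off directly from the separation-of-variables analysis of the eigenvalue problem \eqref{ei2} carried out above. For each slab width $h$ the rectangular piece $\Sigma_h=\Sigma\cap\{0\le y\le h\}$ between the planes $y=0$ and $y=h$ is compact, and these pieces form the exhaustion used in \eqref{stable-infinito}, so it suffices to track the sign of the lowest eigenvalues of \eqref{ei2} and then let $h\to\infty$. The decisive structural fact is that every eigenvalue produced above has the form $\frac{n^2\pi^2}{h^2}+\frac{\beta^2-1}{r^2}$ (compare \eqref{be0} and \eqref{be2}), which is strictly increasing both in $n$ and in the admissible parameter $\beta$. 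Hence the bottom of the spectrum is attained at $n=1$ together with the smallest admissible $\beta$, and in the limit $h\to\infty$ the sign of the lowest eigenvalue is that of $\frac{\beta^2-1}{r^2}$; everything reduces to locating the smallest $\beta$ and comparing it with $1$.

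For item (1), suppose $\mu(\tau_0)\ge 0$. If $\mu(\tau_0)>0$ I invoke Case 1(a): equation \eqref{4t2} has a root $\beta\in(T,\infty)$ and \eqref{be0} gives $\lambda_n=\frac{n^2\pi^2}{h^2}-\frac{1+\beta^2}{r^2}$. If $\mu(\tau_0)=0$ I invoke Case 2(a), where the admissible eigenvalues are $\lambda_n=\frac{n^2\pi^2}{h^2}-\frac{1}{r^2}$. In both subcases, once $h$ is large enough there are many indices $n$ with $\lambda_n<0$; in particular $\lambda_2<0$, so $\mathrm{index}(\Sigma_h)\ge 2$ and, by the relation \eqref{ii}, $\mathrm{index}_w(\Sigma_h)\ge 1$. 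Letting $h\to\infty$ shows that $\Sigma$ is unstable.

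For item (2), suppose $\mu(\tau_0)<0$. First I rule out Cases 1 and 2: equation \eqref{4t2} has no root by the monotonicity argument of Sect. \ref{sec3}, and the solvability condition \eqref{e43} would force $s_0=\sin\gamma/\mu(\tau_0)<0$, which is impossible. Thus the entire spectrum comes from Case 3. Because \eqref{4t32} contributes no root in $I_0=(0,\frac{\pi}{2s_0})$ when $\mu(\tau_0)<0$ and the competing root $\beta_{2,1}$ occurs only for $\mu(\tau_0)>0$, the smallest admissible parameter is the first root $\beta_{1,1}\in(0,\frac{\pi}{2s_0})$ of \eqref{4t31}, and the lowest eigenvalue is $\lambda_{1,1}^1=\frac{\pi^2}{h^2}+\frac{\beta_{1,1}^2-1}{r^2}$ from \eqref{be2}. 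If $\beta_{1,1}<1$, then for large $h$ there are many $n$ with $\lambda_{n,1}^1<0$, so as before $\Sigma$ is unstable. If $\beta_{1,1}\ge 1$, then $\beta_{j,k}\ge\beta_{1,1}\ge 1$ for every admissible root, so every eigenvalue $\lambda_{n,k}^j\ge 0$; the Morse index is $0$ and $\Sigma$ is strongly stable.

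The step I expect to demand the most care is the bookkeeping in item (2): verifying that $\beta_{1,1}$ is genuinely the global minimum among all roots of \eqref{4t31} and \eqref{4t32} and that no smaller parameter is contributed by the borderline Cases 1 and 2. This rests on the interlacing of roots forced by the monotonicity of $\beta\mapsto\tan(\beta s_0)$ on each interval $I_k$, together with the sign hypothesis $\mu(\tau_0)<0$, which removes the competing root $\beta_{2,1}$ from $I_0$. Once this ordering is secured, the monotonicity of $\lambda_{n,k}^j$ in $n$ and $\beta$ makes both conclusions automatic.
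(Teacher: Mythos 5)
Your proposal is correct and follows essentially the same route as the paper: the theorem is just the summary of the separation-of-variables analysis of \eqref{ei2}, with instability read off from the existence of at least two negative eigenvalues ($\lambda_2<0$ plus \eqref{ii}) for large $h$ in the exhaustion, and strong stability from all roots $\beta_{j,k}\ge\beta_{1,1}\ge 1$ forcing $\lambda_{n,k}^j\ge 0$. Your added bookkeeping (ruling out Cases 1 and 2 when $\mu(\tau_0)<0$, and identifying $\beta_{1,1}$ as the smallest admissible root via the interlacing on the intervals $I_k$) matches the paper's discussion verbatim in substance.
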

 
 Theorem \ref{t2} tells us about the stability and instability of $\Sigma$. In case of instability, we study how to calculate the critical value $h_0>0$ in the Plateau-Rayleigh instability criterion. The value $h_0$ is obtained by imposing that there are at least two negative eigenvalues.  Here it is useful to know the distribution of the roots $\beta_{j,k}$. See Fig. \ref{figure-dib}.
 \begin{enumerate}
 \item Case $\mu(\tau_0)<0$. Then $\beta_{1,1}<\beta_{2,1}<\beta_{1,2}<\beta_{2,2}<\cdots$. 
 \item Case $\mu(\tau_0)>0$. Then $\beta_{2,1}<\beta_{1,2}<\beta_{2,2}<\cdots$, where it is assumed that the root $\beta_{2,1}$ exists if $s_0<\sin\gamma/\mu(\tau_0)$. Otherwise, the first root is $\beta_{1,2}$.
\end{enumerate}

\begin{figure}[hbtp]
\begin{center}\scalebox{1}{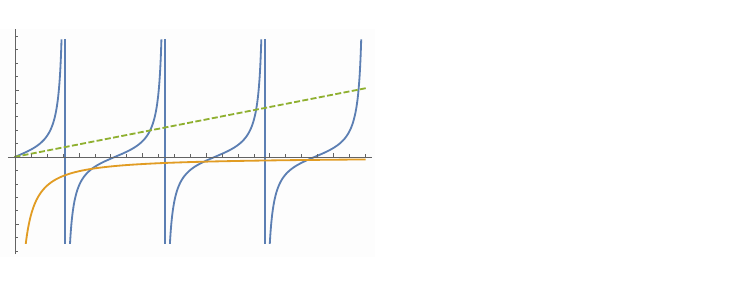 } 
\end{center}
\caption{Distribution of the roots $\beta_{j,k}$ of the equations \eqref{4t31} and \eqref{4t32}. Case $\mu(\tau_0)>0$ (left) and $\mu(\tau_0)<0$ (right), respectively.}\label{figure-dib}
\end{figure}

\begin{corollary}\label{c1}
Let $\s(\c)$ be a symmetric support and let $\Sigma$ be a section of a circular cylinder of radius $r>0$, which it is  a capillary surface on $\s(\c)$ along $L_{s_0}^+\cup L_{s_0}^{-}$. Suppose $\sin\gamma-s_0\mu(\tau_0)\not=0$ and $\mu(\tau_0)\not=0$.   Let $\Sigma_h$ be a rectangular piece of $\Sigma$ of length $h$. If $h>h_0$, then $\Sigma_h$ is unstable, where $h_0$ is given as follows:
\begin{enumerate}
\item Case $\mu(\tau_0)>0$. The value of $h_0$ is   $h_0= \frac{2\pi r}{\sqrt{1+\bar{\beta}_1^2}}$.  
\item Case $\mu(\tau_0)<0$. If $\beta_{1,1}<1$, then  $h_0=\frac{2\pi r}{\sqrt{1-\beta_{1,1}^2}}$.

\end{enumerate}
\end{corollary}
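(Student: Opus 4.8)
The plan is to read off $h_0$ as the exact length at which the eigenvalue problem \eqref{ei2} for the finite piece $\Sigma_h$ begins to possess two negative eigenvalues. The reason two suffice is the general mechanism recorded after \eqref{ii}: if \eqref{p-e} has two eigenfunctions $\phi_1,\phi_2$ with negative eigenvalues, then on $V=\mathrm{span}\{\phi_1,\phi_2\}$ the form $Q$ is negative definite, since $Q[\phi]$ equals the eigenvalue times $\|\phi\|^2$ and eigenfunctions for distinct eigenvalues are $L^2$-orthogonal; the single linear constraint $\int_\Sigma u\,d\Sigma=0$ then leaves a nonzero $u\in V\cap\mathcal V$ with $Q[u]<0$, so $\Sigma_h$ is unstable. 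Equivalently, $\mathrm{index}(\Sigma_h)\geq 2$ forces $\mathrm{index}_w(\Sigma_h)\geq 1$ by \eqref{ii}. Thus I only need, for each $h>h_0$, two negative eigenvalues of \eqref{ei2}; the eigenfunctions I will use are of the separated form $f_n(s)\sin(n\pi t/h)$ with different $n$, hence automatically independent.

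For the case $\mu(\tau_0)>0$ I would use the exponential branch. Let $\bar\beta_1$ be the unique root of \eqref{4t2} in $(\mu(\tau_0)/\sin\gamma,\infty)$, whose existence and uniqueness were shown in the case $\mu(\tau_0)>0$ above. By \eqref{be0} this branch contributes the eigenvalues $\lambda_n=\frac{n^2\pi^2}{h^2}-\frac{1+\bar\beta_1^2}{r^2}$, strictly increasing in $n$, so $\lambda_1<\lambda_2$. Solving $\lambda_2=0$ gives $h=\frac{2\pi r}{\sqrt{1+\bar\beta_1^2}}=h_0$, and for $h>h_0$ one has $\lambda_1<\lambda_2<0$, the two required negative eigenvalues. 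Any additional negative eigenvalues from the trigonometric branch only reinforce the conclusion.

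For the case $\mu(\tau_0)<0$ with $\beta_{1,1}<1$, equation \eqref{4t2} has no root, so every eigenvalue is trigonometric and given by \eqref{be2}. I would select the smallest root $\beta_{1,1}$ and the eigenvalues $\lambda_{n,1}^1=\frac{n^2\pi^2}{h^2}+\frac{\beta_{1,1}^2-1}{r^2}$; since $\beta_{1,1}<1$ the term $\frac{\beta_{1,1}^2-1}{r^2}$ is negative and these eigenvalues again increase strictly in $n$. Now $\lambda_{2,1}^1=0$ yields $h=\frac{2\pi r}{\sqrt{1-\beta_{1,1}^2}}=h_0$, and for $h>h_0$ one has $\lambda_{1,1}^1<\lambda_{2,1}^1<0$, furnishing the two negative eigenvalues.

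The heavy computation—producing the roots $\bar\beta_1$, $\beta_{1,1}$ and the eigenvalue lists \eqref{be0}, \eqref{be2}—is already done in the separation-of-variables analysis preceding Thm.~\ref{t2}, so the corollary is essentially bookkeeping: monotonicity of the relevant eigenvalues in $n$ and solving for the vanishing of the second one. Two points deserve care. First, the hypotheses $\mu(\tau_0)\neq 0$ and $\sin\gamma-s_0\mu(\tau_0)\neq 0$ are exactly what exclude the degenerate middle branch of \eqref{b2} (where $\frac{r^2n^2\pi^2}{h^2}-\lambda r^2-1=0$), keeping \eqref{be0} and \eqref{be2} the complete source of eigenvalues. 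Second, there is the passage from two negative eigenvalues to genuine volume-constrained instability, which is where $\mathcal V$ and the relation \eqref{ii} enter; I expect this to be the only conceptually delicate step, everything else being routine.
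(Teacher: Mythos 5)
Your proposal is correct and follows essentially the same route as the paper: identify $h_0$ as the length at which the second eigenvalue of the relevant branch ($\bar\beta_1$ from \eqref{4t2} when $\mu(\tau_0)>0$, $\beta_{1,1}$ from \eqref{4t31} when $\mu(\tau_0)<0$) crosses zero, and invoke the two-negative-eigenvalues criterion from Sect.~\ref{sec2} to pass to volume-constrained instability. The paper additionally checks that in the case $\mu(\tau_0)>0$ any trigonometric root below $1$ would yield a strictly larger threshold, so the exponential branch gives the binding value of $h_0$; this refinement is not needed for the one-directional implication you prove.
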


\begin{proof} 
The discussion of the case $\mu(\tau_0)<0$ is a consequence of (2) of Thm.  \ref{t2}. Suppose now  $\mu(\tau_0)>0$. For $\beta_1$, we have $\lambda_2<0$ if and only if $h>2\pi r/\sqrt{1+\beta_1^2}$. If all roots of \eqref{4t31} and \eqref{4t32} are greater than or equal to  $1$, then $\lambda_{n,k}^j\geq 0$ and the result is proved. Otherwise, if $\beta_{j,k}$ is the first root of  \eqref{4t31} or \eqref{4t32} with $\beta_{j,k}<1$, then $\lambda_{2,k}^j<0$ if and only if  $h>\frac{2\pi r}{\sqrt{1-\beta_{j,k}^2}}$. However, $ 2\pi r/\sqrt{1-\beta_{j,k}^2}> 2\pi r/\sqrt{1+\beta_1^2}$.   
\end{proof}

In the following examples, we show  how to find the value $h_0$ in the Plateau-Rayleigh instability criterion using Corollary \ref{c1}. In these examples, the support surface is a parabolic  cylinder considering the cases that $\kappa$ is positive and negative. It will be assumed that the contact angle is $\gamma=\pi/2$.

\begin{example}\label{p1}
{\rm  Let $\s(\c)$ be a parabolic cylinder generated by the parabola $\c(\tau)=(\tau,0,\tau^2)$.  Let $\gamma=\pi/2$. We now calculate a section of circular cylinder $\Sigma$ intersecting  orthogonally $\s(\c)$.   Fixed  $\c(\tau)$, the   circle  $\R$ orthogonal to $\c$ at $\tau$ is centered at $(0,0,-\tau^2)$ and of radius $r=\tau\sqrt{1+4\tau^2}$.  The circle $\R$ is the generating curve of a circular cylinder $\Sigma$ of radius $r$. The two points $\c(\tau)$ and $\c(-\tau)$ define two arcs in the circle $\R$. If the section of the circular cylinder is contained in the convex domain determined by $\s(\c)$, the curvature $\kappa$ is positive (Fig. \ref{fig6}, left); for the other section, $\kappa$ is negative, see Ex. \ref{p3} below (Fig. \ref{fig6}, right).  

Suppose the first case. We know by Thm.  \ref{t2} that $\Sigma$ is unstable for large values of $h$. We fix the contact  at $\tau_0=1$. Then $L_{1}^+\cup L_1^{-}$ are the straight-lines  $\{x=\pm 1,z=1\}$.  Now $r=\sqrt{5}$, $\kappa(1)=1/5^{3/2}$  and 
$\mu(1)= 2/5$.  The value $s_0$ such that $\R(s_0)=\c(1)$ is $s_0=\sin^{-1}(1/r)= 0.463$. Notice that $\mu(1)>0$ and $\sin\gamma-\mu(1) s_0\not=0$. This implies that equation \eqref{e43} has no solutions.
We   solve \eqref{4t2}, obtaining $\beta_1= 0.958$.   By Cor.  \ref{c1},  we have $h_0=2\pi r/\sqrt{1+\beta_1^2} =10.142$.
}
\end{example}

\begin{example}\label{p2}
{\rm   We study other circular cylinders that are capillary with the above parabolic cylinder $\s(\c)$. The purpose is to compare the value $h_0$ after solving equations \eqref{4t2}, \eqref{4t31} and \eqref{4t32}. Note that $s_0=\sin^{-1}(\tau_0/r)$. 
\begin{enumerate}
\item Let $\tau_0=1/4$. Then $s_0=1.107$. The solution of \eqref{4t2} is $\beta_1=0.649$ and thus  $h_0=2\pi r/\sqrt{1+\beta_1^2}= 1.473$.   
\item Let $\tau_0=2$. Then $s_0=0.244$ and  the solution of \eqref{4t2} is $\beta_1=0.989$ and   $h_0=36.829$.
\item Let $\tau_0=4$. Now $s_0=0.124$ and the solution of \eqref{4t2} is $\beta_1=0.997$. Then   $h_0=2\pi r/\sqrt{1+\beta_1^2}=143.466$.
\end{enumerate} 
}
\end{example}

\begin{example}\label{p3}
{\rm  Let $\s(\c)$ be the same parabolic cylinder,   but now we assume that the arc of $\R$ that determines $\Sigma$ lies outside of the convex domain determined by $\c$ (Fig. \ref{fig6}  right). Then $\kappa$ is negative. Suppose again $\gamma=\pi/2$, in particular,  
$$\mu(\tau)=-\frac{2\tau}{1+4\tau^2}.$$
Consider the case $\tau_0=2$, or equivalently, $s_0=0.244$. A computation of the solutions of \eqref{4t32} gives $\beta_{1,1}=0.970$, which is less than $1$. The critical value   is  $h_0=2\pi r/\sqrt{1-\beta_{1,1}^2}=215.687$: see (2) of Cor.  \ref{c1}.}
\end{example}

\begin{figure}[hbtp]
\begin{center}\scalebox{1}{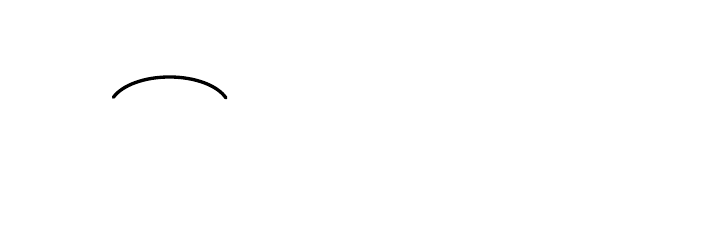 }
\end{center}
\caption{A circular cylinder as capillary surface on a parabolic cylinder $\s(\c)$. Left: concave case (Exs. \ref{p1} and \ref{p2}). Right: convex case (Example \ref{p3}).}\label{fig6}
\end{figure}
\section{Bifurcating liquid channels}\label{sec5}

In this section we investigate the existence of  new examples of capillary surfaces  using  techniques of bifurcation. The methods  are based on the  ``bifurcation at a simple eigenvalue'' of Crandall and Rabinowitz \cite{cr}. We will recall how works the method  but applied to our context of capillary surfaces. Let $\Sigma$ be a capillary circular cylinder of radius $r_0>0$ on a symmetric support $\s(\c)$. Let $H_0=\pm 1/(2r_0)$ be its mean curvature, where the sign depends on the orientation on $\Sigma$. 

First it is necessary to have a one-parameter family of capillary surfaces containing $\Sigma$ as one of its members, and this family will be perturbed to find the new examples. These surfaces are again circular cylinders making the same contact angle with the support surface. The construction of these cylinders is as follows.  Let  $s_0\in I$ be the contact point of the circle $\R$ with  $\c$, $\R(s_0)=\c(\tau_0)$. This circle is centered at a point $m_0$ of the $z$-axis and of radius $r=\lvert\c(s_0)-m_0\rvert$. Let $v$ be a unit vector in the direction of $\c(\tau_0)-m_0$. If  $\textbf{n}$   denotes the unit normal vector of $\c$, the contact angle $\gamma$ satisfies $\cos\gamma=\langle \textbf{n}(\tau_0),v\rangle$. Around a neighborhood $(\tau_0-\epsilon,\tau_0+\epsilon)$ of $\tau_0$, we can change smoothly $v$ by   unit vectors  $v(\tau)$ in the $xz$-plane such that $\langle \textbf{n}(\tau),v(\tau)\rangle=\cos\gamma$. Each vector $v(\tau)$ determines a straight-line through $\c(\tau)$ and direction $v(\tau)$. Its intersection with the $z$-axis defines a center and a radius of a circle. The circular cylinder generated by this circle   is a capillary surface on $\s(\c)$   with contact angle $\gamma$.

\begin{remark}{\rm  
The above construction does not work for   capillary planar strips.  Fixed a plane $\Pi$ making an angle $\gamma$ with $\s(\c)$, any plane close to $\Pi$ must be parallel in order to be a capillary surface (Example \ref{ex1}). However, the contact angle is different and we have no a family of capillary planar strips making the same contact angle.  }
\end{remark}

  Let $V$ be an open set of $0\in C^\infty(\Sigma)$ such that  for all $u\in V$,   $\Sigma_u=\{p+u(p)N(p)\in\r^3:p\in\Sigma\}$ is an embedded surface.   If $H(u)$ is the mean curvature of the surface $\Sigma_u$, define 
\begin{equation}\label{ff}
F\colon   V\times\r\to C^\infty(\Sigma),\quad F(u,H)=2(H-H(u)).
\end{equation}
Therefore, $F(u,H)=0$ if and only if $\Sigma_u$ is a surface with constant mean curvature equal to $H$. Since the mean curvature of $\Sigma$ is $H_0$, we have $F(0,H_0)=0$. On the other hand,  we parametrize the above family of circular cylinders by $H$. Around the value $H_0$, each cylinder is the normal graph $\Sigma_{w_H}$ for a certain function $w_H\in C^\infty(\Sigma)$. Then $F(w_{H},H)=0$, where  $w_{H_0}=0$.   These cylinders $\Sigma_{w_H}$  can be interpreted as  a curve $H\mapsto (w_H,H)$ of solutions of \eqref{ff} through $H_0$. The objective in our result of bifurcation is to perturb this family of solutions around $(0,H_0)$  by means of normal variations to get  another curve of solutions of \eqref{ff}  through the same point $(0,H_0)$. 

The uniqueness of solutions of $ F(u,H)=0$  is related with the Implicit Function theorem. If the Fr\'echet derivative $D_uF$ at $(0,H_0)$ is bijective, then there is $\delta>0$,  a neighborhood of $0\in C^\infty(\Sigma)$, which we suppose to be $V$ again, and a unique injective map $\Psi\colon (H_0-\delta,H_0+\delta)\to V$ with $\Psi(H_0)=0$ and $F(\Psi(H),H)=0$ for all $H\in (H_0-\delta,H_0+\delta)$. By uniqueness, $\Psi(H)=w_H$ and the family   $\Sigma_{ \Psi(H)}$ coincides with the above  one-parameter family of  circular cylinders   $\Sigma_{w_H}$.

Because we are searching   examples different from the cylinders $\Sigma_{w_H}$, the Implicit Function Theorem must fail. The Fr\'echet derivative $D_uF(0,H_0)$ is
$$D_u F(0,H_0)[v]=-\mathcal{L}[v], \quad v\in C^\infty(\Sigma),$$
where $\mathcal{L}$ is the Jacobi operator \eqref{l1}  \cite{ko,vo}. Thus if uniqueness fails, the equation $\mathcal{L}[v]=0$ has non-trivial solutions. This implies that its kernel is not trivial, that is,  $0$ is an eigenvalue of $\mathcal{L}$.

 We now state the   result of bifurcation of   Crandall and Rabinowitz     \cite{cr}. See also \cite[Th. 13.5]{smo}. In its abstract formulation, the result says the following.

\begin{theorem}\label{t-bifu} Let $X$ and $Y$ be  Banach spaces, $V$ an open subset of $X$ and $\mathcal{F}:V\times I\rightarrow Y$ be a twice continuously Fr\'echet differentiable functional, where $I\subset \r$ and $H_0\in I$.  Let $w\colon (H_0-\epsilon,H_0+\epsilon)\to V$ be a differentiable map such that $w(H_0)=0$ and  $\mathcal{F}(w(H),H)=0$ for   $H\in (H_0-\epsilon,H_0+\epsilon)$. Suppose:
\begin{enumerate}
\item $\mbox{dim Ker}(D_u \mathcal{F}(0,H_0))=1$. Assume that Ker$(D_u \mathcal{F}(0,H_0))$ is spanned by $u_0$.
\item The codimension of  $\mbox{Im } D_u \mathcal{F}(0,H_0) $ is $1$.
\item $D_HD_u\mathcal{F}(0,H_0)(u_0)\not\in\mbox{Im }D_u \mathcal{F}(0,H_0) $.
\end{enumerate}
Then  there exists a   continuously differentiable curve $s\mapsto (u(s),H(s))$, $s\in(H_0-\epsilon,H_0+\epsilon)$, with $u(0)=0$, $H(0)=H_0$, such that $\mathcal{F}(u(s),H(s))=0$, for any $|s|<\epsilon$. Moreover, $(0,H_0)$ is a bifurcation point of the equation $\mathcal{F}(u,H)=0$ in the following sense: in a neighborhood  of $(0,H_0)$, the set of solutions of $\mathcal{F}(u,H)=0$ consists only of the curve $H\mapsto (w_H,H)$ and the curve $s\mapsto (u(s),H(s))$.
\end{theorem}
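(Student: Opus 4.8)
The plan is to prove this abstract statement by the classical Lyapunov--Schmidt reduction, collapsing the infinite-dimensional equation $\mathcal{F}(u,H)=0$ to a single scalar equation whose solutions can be read off with the implicit function theorem. First I would normalize the problem so that the known branch becomes trivial. Setting $\tilde{u}=u-w(H)$ and $\tilde{\mathcal{F}}(\tilde u,H):=\mathcal{F}(w(H)+\tilde u,H)$, one has $\tilde{\mathcal{F}}(0,H)=0$ for all $H$ and, since $w(H_0)=0$, $D_{\tilde u}\tilde{\mathcal{F}}(0,H_0)=D_u\mathcal{F}(0,H_0)=:L_0$. Thus hypotheses (1) and (2) say precisely that $L_0$ is a Fredholm operator of index $0$: its kernel is $\langle u_0\rangle$ and its image has codimension one. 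I would then fix topological direct sums $X=\langle u_0\rangle\oplus X_1$ and $Y=\mbox{Im}(L_0)\oplus Y_1$ with $\dim Y_1=1$ and projection $Q$ onto $Y_1$ along $\mbox{Im}(L_0)$; read for $\tilde{\mathcal{F}}$, the transversality hypothesis (3) becomes $Q\,D_HD_u\tilde{\mathcal{F}}(0,H_0)[u_0]\neq0$.

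Next comes the reduction itself. Writing $\tilde u=su_0+v$ with $s\in\r$ and $v\in X_1$, I would split $\tilde{\mathcal{F}}=0$ into the \emph{range equation} $(I-Q)\tilde{\mathcal{F}}(su_0+v,H)=0$ and the \emph{bifurcation equation} $Q\tilde{\mathcal{F}}(su_0+v,H)=0$. The partial derivative of the range equation in $v$ at $(0,0,H_0)$ is $(I-Q)L_0|_{X_1}\colon X_1\to\mbox{Im}(L_0)$, which is an isomorphism because $L_0$ maps $X_1$ bijectively onto its image and $I-Q$ is the identity there. The implicit function theorem then yields a $C^1$ map $v=v(s,H)$ solving the range equation with $v(0,H)=0$, as forced by the trivial branch. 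Differentiating the range equation in $s$ shows in addition that $L_0\,\partial_s v(0,H_0)$ lies in $Y_1\cap\mbox{Im}(L_0)=\{0\}$, whence $\partial_s v(0,H_0)=0$; this is the fact that later makes the transversality computation close up cleanly.

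Substituting $v(s,H)$ into the bifurcation equation produces a scalar function $\Phi(s,H):=Q\tilde{\mathcal{F}}(su_0+v(s,H),H)$ valued in $Y_1\cong\r$. Since $\Phi(0,H)=0$, I would factor $\Phi(s,H)=s\,\Psi(s,H)$ with $\Psi$ of class $C^1$ and $\Psi(0,H)=\partial_s\Phi(0,H)$; the nontrivial solutions correspond exactly to $\Psi=0$. One checks $\Psi(0,H_0)=QL_0u_0=0$, and, using $\partial_s v(0,H_0)=0$ to discard the mixed terms, $\partial_H\Psi(0,H_0)=Q\,D_HD_u\tilde{\mathcal{F}}(0,H_0)[u_0]\neq0$ by hypothesis (3). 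A final application of the implicit function theorem to $\Psi$ gives a $C^1$ curve $H=H(s)$ with $H(0)=H_0$ and $\Psi(s,H(s))=0$, and setting $u(s)=w(H(s))+su_0+v(s,H(s))$ produces the asserted bifurcating curve; the local uniqueness built into the reduction shows that near $(0,H_0)$ the solution set consists only of this curve together with $\{(w_H,H)\}$.

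The step I expect to be the main obstacle is the transversality computation in the last paragraph: one must verify $\partial_s v(0,H_0)=0$ and keep careful track of the projection $Q$ so that the second-order mixed derivatives of $\tilde{\mathcal{F}}$ collapse and leave precisely the quantity in hypothesis (3). This is exactly where the $C^2$ regularity of $\mathcal{F}$ and the correct transfer of the transversality condition from $\mathcal{F}$ to the shifted functional $\tilde{\mathcal{F}}$ are essential.
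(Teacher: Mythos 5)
The paper does not actually prove Theorem \ref{t-bifu}: it is quoted from Crandall--Rabinowitz \cite{cr} (see also \cite[Th. 13.5]{smo}) and used as a black box, so there is no in-paper argument to compare yours against. Your Lyapunov--Schmidt reduction is the standard proof of this result, and its skeleton is correct: the splitting into range and bifurcation equations, the isomorphism $(I-Q)L_0|_{X_1}\colon X_1\to\mathrm{Im}(L_0)$, the vanishing of $\partial_s v(0,H_0)$, the factorization $\Phi=s\Psi$, and the final implicit function theorem applied to $\Psi$ all go through as you describe, as does the uniqueness statement. Two assumptions are left implicit: you need $\mathrm{Im}(L_0)$ closed (i.e., $L_0$ Fredholm of index $0$, which holds in the paper's elliptic setting but is not literally contained in hypothesis (2)), and you need $w$ regular enough that $\tilde{\mathcal{F}}$ is $C^2$, so that $\Psi$ is $C^1$.

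The one step that does not close as written is the transfer of hypothesis (3) through the shift $\tilde u=u-w(H)$. Differentiating $D_{\tilde u}\tilde{\mathcal{F}}(0,H)=D_u\mathcal{F}(w(H),H)$ in $H$ gives
\[
D_HD_{\tilde u}\tilde{\mathcal{F}}(0,H_0)[u_0]=D_HD_u\mathcal{F}(0,H_0)[u_0]+D_u^2\mathcal{F}(0,H_0)[w'(H_0),u_0],
\]
and your transversality computation needs $Q$ applied to the left-hand side to be nonzero, whereas hypothesis (3) only asserts that $Q$ applied to the first term on the right is nonzero. The two conditions coincide precisely when $D_u^2\mathcal{F}(0,H_0)[w'(H_0),u_0]\in\mathrm{Im}(L_0)$, which is automatic in the original Crandall--Rabinowitz setting (there the known branch is $w\equiv 0$, so $w'(H_0)=0$) but not for a general branch $w(H)$. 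So you must either verify that this extra term lies in the image, or read the hypothesis as transversality of $\frac{d}{dH}\big|_{H_0}\bigl(D_u\mathcal{F}(w(H),H)\bigr)[u_0]$. This is arguably an imprecision in the statement as reproduced in the paper rather than a flaw in your strategy, but your proof should state explicitly which condition it is using, since you yourself flag this transfer as the delicate point without resolving it.
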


The arguments of our main result in this section (Thm.  \ref{t4} below) follow the same procedure in \cite{vo}, where Vogel gets bifurcation of circular cylinders supported on wedges.  We refer to Sect. 3 of  \cite{vo} for further details.
 Since  $\Sigma$ is an infinite circular cylinder, it is natural to  impose periodicity in the boundary conditions along the direction of the rulings. Let $\Sigma$ be parametrized  by \eqref{pc}, where $(t,s)\in \r\times [-s_0,s_0]$.  By this periodicity,  $\Sigma$ is identified with the rectangle $ [0,2\pi]\times [-s_0,s_0]$ and $\partial\Sigma$ with $  [0,2\pi]\times \{-s_0,s_0\}$ in the $(t,s)$-domain. For further purposes, we need to restrict to   functions   that are even functions in the $t$ variable.

 Let $V$ be an open set of $C^{\infty}_{2\pi,even}(\Sigma)$, $0\in V$,    such   that $\Sigma_u$ is a normal graph on $\Sigma$ for all $u\in V$. Here  the subscript means that the functions are $2\pi$-periodic and even on the $t$-variable. Define
$$\mathcal{F}:V\times\r\rightarrow  C^{\infty}_{2\pi,even}(\Sigma)\times C^{\infty}_{2\pi,even}(\partial \Sigma) ,\quad  \mathcal{F}(u,H)=(F(u,H),\gamma(u)-\gamma),$$
where $\gamma(u)$ is the contact angle of  $\Sigma_u$ with $\s(\c)$. The regularity of   $\mathcal{F}$ is assured by a standard argument (\cite[Lem. 3.7, 3.8]{vo}).  The partial of $D_u\mathcal{F}$ at $(0,H)$   is  
 \begin{equation}\label{18}
 L[v]:=D_u\mathcal{F}(0,H)[v] =(\mathcal{L}[v],\mathbf{b}(v)).
 \end{equation}
 Here  $\mathbf{b}(v)= \frac{1}{r}v_s\mp  \q v$, where the  sign $-$ (resp. $+$)  corresponds with the value at $  [0,2\pi]\times \{s_0\}$ (resp.   $ [0,2\pi]\times \{-s_0\}$)  \cite{vo,we}.

According to Thm.  \ref{t4}, the first step is to calculate the eigenspace $E_0$ for $L$.   We need to come back to the  eigenvalue problem \eqref{ei2} using separation of variables. In contrast to the proof of  Thm.  \ref{t2}, we cannot impose the last boundary condition in \eqref{ei2}  in the walls $y=0$ and $y=h$ because of the $2\pi$-periodicity on the variable $t$.  Thus, let $u$ expand as a Fourier expression 
$$u(t,s)=\sum_{n=0}^\infty f_n(s)\cos(nt)+\sum_{n=1}^\infty g_n(s)\sin(nt).$$
The first equation in \eqref{ei2} is
$$\sum_{n=0}^\infty \left(\frac{1}{r^2}f_n''+\left(\frac{1}{r^2}+\lambda-n^2\right)f_n\right)
\cos(nt)+\sum_{n=1}^\infty \left(\frac{1}{r^2}g_n''+\left(\frac{1}{r^2}+\lambda-n^2\right)g_n\right) \sin(nt)=0.$$
The boundary conditions in \eqref{ei2}  are 
$$\sum_{n=0}^\infty \left(f_n'(s_0)-\frac{\mu}{\sin\gamma}f_n(s_0)\right)\cos(nt)+
\sum_{n=1}^\infty \left(g_n'(s_0)-\frac{\mu}{\sin\gamma}g_n(s_0)\right)\sin(nt)=0,$$
$$\sum_{n=0}^\infty \left(f_n'(-s_0)+\frac{\mu}{\sin\gamma}f_n(-s_0)\right)\cos(nt)+
\sum_{n=1}^\infty \left(g_n'(-s_0)+\frac{\mu}{\sin\gamma}g_n(-s_0)\right)\sin(nt)=0,$$
We need to look for non-trivial solutions of 
\begin{equation}\label{b4}
\left.
\begin{aligned}
f_n''(s)+ \left(1+\lambda r^2-n^2 r^2\right)f_n(s)&=0, \\
f_n'(s_0)-\frac{\mu(\tau_0)}{\sin\gamma}f_n(s_0)&=0, \\
f_n'(-s_0)+\frac{\mu(\tau_0)}{\sin\gamma}f_n(-s_0)&=0, 
\end{aligned}
\right\}
\end{equation}
and analogously for the functions $g_n$. If   a function $u$ is a non-trivial solution of $\mathcal{L}[u]=0$, then   $f_n$ and $g_n$ are non-zero for some natural numbers $n\in\mathbb{N}$. Notice that   equations \eqref{b4} for $f_n$ and $g_n$ coincide.

 The solution of \eqref{b4} depends on the sign of $1+\lambda r^2-n^2 r^2$.  As we noticed at the beginning of this section, for bifurcation, the first eigenvalue of $L$ must be negative because otherwise, $\Sigma$ would be strongly stable. If $\lambda=0$ is an eigenvalue, then the parenthesis in the first equation in \eqref{b4} is $1-n^2 r^2$.  The arguments that follow would be difficult to carry out due to the presence of the two unknowns $n$ and $r$ in \eqref{b4}.  For that reason, it will be assumed  that $0$ is the  second eigenvalue of $L$ ($n=1$ in \eqref{b4}). This reduces the work to find the value of $r$, that is,  the cylinder where bifurcation appears. 
  
    We  now repeat the same arguments done in Sect. \ref{sec4}.
\begin{enumerate}
\item Case $1+\lambda r^2-n^2 r^2>0$. Let $\beta>0$ with $\beta^2=1+\lambda r^2-n^2 r^2$. The eigenvalues are 
\begin{equation}\label{ev1}
\lambda_n=n^2+\frac{\beta^2-1}{r^2},\quad n\in\mathbb{N}.
\end{equation}
The first eigenvalue corresponds with $n=0$ and $\lambda_0<0$ implies $\beta<1$. The condition $\lambda_1=0$ yields $\beta=\sqrt{1-r^2}$. The solution of \eqref{b4} is $f_1(s)=A\cos(\beta s)+B\sin(\beta s)$, and from \eqref{4t31} or \eqref{4t32} we have  
\begin{equation}\label{v1}
\tan(\beta s_0)=-\frac{\mu(\tau_0)}{\beta\sin\gamma}\quad\mbox{or}\quad \tan(\beta s_0)=\frac{\beta\sin\gamma}{\mu(\tau_0)}.
\end{equation}
In the first case, $B=0$ and $f_1(s)=A\cos(\beta s)$. In the second one, we have $A=0$ and $f_1(s)=B\sin(\beta s)$. For further purposes, it deserves to observe that in both cases,  we have $f_1(s_0)^2=f_1(-s_0)^2$. We study a little more the solutions of \eqref{v1} according to the sign of $\mu(\tau_0)$. If $\mu(\tau_0)=0$, then   there are no solutions of \eqref{v1}.
\begin{enumerate}
\item Case $\mu(\tau_0)>0$. 
 There are many solutions     of the first equation \eqref{v1} and the first root belongs to $(\frac{\pi}{2s_0},\frac{3\pi}{2s_0})$.   The second equation of \eqref{v1} also has many solutions and the first root belongs to the interval $(0,\frac{\pi}{2s_0})$ if $s_0<\sin\gamma/\mu(\tau_0)$ or to the interval $(\frac{\pi}{2s_0},\frac{3\pi}{2s_0})$ if $s_0>\sin\gamma/\mu(\tau_0)$.  
 
\item Case $\mu(\tau_0)<0$.
  There are many solutions of     the first equation \eqref{v1} and the first root belongs to $(0,\frac{\pi}{2s_0})$.  The second equation of \eqref{v1} also has many solutions and the first root belongs to the interval $(\frac{\pi}{2s_0},\frac{3\pi}{2s_0})$.  
   \end{enumerate}
\item Case $1+\lambda r^2-n^2 r^2=0$. 
 The eigenvalues are 
\begin{equation}\label{ev2}
\lambda_n=n^2-\frac{1}{r^2},\quad n\in\mathbb{N}.
\end{equation}
Now $\lambda_0=-1/r^2$  is negative and $\lambda_1=1-\frac{1}{r^2}$. The condition $\lambda_1=0$ gives $r=1$. If $\mu(\tau_0)=0$, then $f_1(s)=A$. If $\mu(\tau_0)\not=0$, then \eqref{e43} implies
\begin{equation}\label{v2}
s_0=\frac{\sin\gamma}{\mu(\tau_0)}.
\end{equation}
 Now $A=0$ and $f_1(s)=Bs$. In both cases, $f_1$ satisfies  $f_1(s_0)^2=f_1(-s_0)^2$.

\item Case $1+\lambda r^2-n^2 r^2<0$.  Let $\beta>0$ with $\beta^2=-1-\lambda r^2+n^2 r^2$. The eigenvalues are 
\begin{equation}\label{ev3}
\lambda_n=n^2-\frac{1+\beta^2}{r^2},\quad n\in\mathbb{N}.
\end{equation}
Then $\lambda_0=-\frac{1+\beta^2}{r^2}<0$. Letting $\lambda_1=0$, we have  $\beta=\sqrt{r^2-1}$. By \eqref{4t2},  it is required that $\beta$ is a solution of
\begin{equation}\label{v3}
e^{4\beta s_0}=\left(\frac{\beta\sin\gamma+\mu(\tau_0)}{\beta\sin\gamma-\mu(\tau_0)}\right)^2.
\end{equation}
We now discuss  the solutions of \eqref{v3} depending on the sign of $\mu(\tau_0)$.
\begin{enumerate}
\item Case $\mu(\tau_0)>0$.  There is a unique solution $\beta$ of \eqref{v3}, with $\beta>\mu(\tau_0)/\sin\gamma$.  
\item Case $\mu(\tau_0)\leq 0$. There are no solutions of \eqref{v3} following the same arguments as in  Sect. \ref{sec4}. 
\end{enumerate}
If such a solution exists, then 
$$f_1(s)=A\left(e^{\beta s}+\left(\frac{\beta\sin\gamma-\mu(\tau_0)}{\beta\sin\gamma+\mu(\tau_0)}\right)e^{\beta (2s_0-s)}\right).$$
Using \eqref{v3}, the function $f_1$ is 
$$f_1(s)=A(e^{\beta s}+e^{-\beta s})=2A\cosh(\beta s).$$
Again,  the function $f_1$ satisfies $f_1(s_0)^2=f_1(-s_0)^2$.
\end{enumerate}
Notice that the three cases are mutually disjoint because  $r<1$ in (1), $r=1$ in (2)  and $r>1$ in (3).   It is clear that in (2) and (3) the solution is unique. It is only in the case (1) where we may have many solutions. Recall that in (1) we are imposing   $\beta<1$.

Suppose that there exists    only one solution $\beta$ of \eqref{v1}, \eqref{v2} or \eqref{v3}. For this value of $\beta$,   we find an eigenfunction $f_1(s)\cos(t)$ for the eigenvalue $0$. Similarly, we have $f_1(s)\sin(t)$. Since we restrict to even functions in the $t$-variable, the function $f_1(s)\sin(t)$ is discarded.  Then   $E_0$ is of dimension $1$ and spanned by $u_0(t,s)=f_1(s)\cos(t)$. Consequently, it holds (1) of Thm.  \ref{t-bifu}.  

\begin{remark} {\rm Once we have proved that $E_0$ is one-dimensional, we can observe that $\int_\Sigma u_0\, d\Sigma=0$  because $u_0(t,s)=f_1(s)\cos t$ and $\int_0^{2\pi}\cos t\, dt=0$. This is expected since, otherwise, there is uniqueness of solutions of \eqref{ff} around $(0,H_0)$. Indeed, it was proved in \cite{ko} that if  the eigenspace $E_0$ of  the eigenvalue $0$ is of dimension $1$, with $E_0=\mbox{span}(u_0)$ and if $\int_\Sigma u_0\, d\Sigma\not=0$, then there is a unique deformation of $\Sigma$ by  $H$-surfaces, where $H$ belongs to a neighborhood of $H_0$. }
\end{remark}

We now turn to study the condition (2) of Thm.  \ref{t-bifu} calculating $\mathrm{Im}(L)$, where $L$ is defined in \eqref{18}, and showing that its codimension is $1$. The argument is standard and it  is given in \cite[Lem. 3.12]{vo} after the needed modifications for  our context.

\begin{lemma}\label{le-vo} Suppose that $L$ has a kernel spanned by $u_0$. A pair $(h_1,h_2)\in C^{\infty}_{2\pi,even}(\Sigma)\times C^{\infty}_{2\pi,even}(\partial\Sigma)$ belongs to $\mathrm{Im}(L)$ if and only if 
\begin{equation}\label{oo}
\int_0^{2\pi}\int_{-s_0}^{s_0} u_0h_1\, ds\ dt=
 -2H\int_0^{2\pi}\left(u_0(-s_0,t)h_2(-s_0,t)+u_0(s_0,t)h_2(s_0,t)\right)\,dt.
 \end{equation}
 \end{lemma}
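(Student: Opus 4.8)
The plan is to read the statement as the Fredholm solvability (``orthogonality'') condition for the operator $L=(\mathcal{L},\mathbf{b})$ of \eqref{18}: a pair $(h_1,h_2)$ lies in $\mathrm{Im}(L)$ exactly when it is annihilated by the one-dimensional cokernel, and the content of the lemma is to display that annihilating functional explicitly in terms of the kernel generator $u_0$. First I would prove necessity. Given $v\in C^\infty_{2\pi,even}(\Sigma)$ with $L[v]=(h_1,h_2)$, that is $\mathcal{L}[v]=h_1$ on $\Sigma$ and $\mathbf{b}(v)=h_2$ on $\partial\Sigma$, I would pair against $u_0$ by Green's second identity for the Jacobi operator \eqref{l1}. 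Since $\mathcal{L}=\Delta+\lvert A\rvert^2$ is formally self-adjoint, the potential term cancels and
$$\int_\Sigma\left(u_0\,\mathcal{L}[v]-v\,\mathcal{L}[u_0]\right)d\Sigma=\int_{\partial\Sigma}\left(u_0\,\frac{\partial v}{\partial\nu}-v\,\frac{\partial u_0}{\partial\nu}\right)ds.$$
The $2\pi$-periodicity in $t$ forces the edges $t=0$ and $t=2\pi$ to cancel, so only the two lines $s=\pm s_0$ contribute to $\partial\Sigma$.

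Because $u_0$ spans $\ker L$ it is a Jacobi field, $\mathcal{L}[u_0]=0$, which removes the second interior term and leaves $\int_\Sigma u_0 h_1\,d\Sigma$. On the boundary I would insert and subtract $\q\,u_0 v$, writing $u_0\,\partial_\nu v-v\,\partial_\nu u_0=u_0(\partial_\nu v-\q v)-v(\partial_\nu u_0-\q u_0)$; the homogeneous Robin condition satisfied by $u_0$ (it lies in the kernel, so $\mathbf{b}(u_0)=0$, equivalently $\partial_\nu u_0-\q u_0=0$ at both ends) annihilates the second bracket. There remains $\int_{\partial\Sigma}u_0(\partial_\nu v-\q v)\,ds$, which I would pass to $(t,s)$-coordinates using $d\Sigma=r\,dt\,ds$, the conormal identities $\partial_\nu v=v_s/r$ at $s_0$ and $\partial_\nu v=-v_s/r$ at $-s_0$, and the definition $\mathbf{b}(v)=\tfrac1r v_s\mp\q v=h_2$. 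Collecting the two boundary lines and absorbing the factor $1/r$ into $-2H$ via $H=\pm 1/(2r)$ produces the right-hand side of \eqref{oo}.

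For sufficiency I would invoke the elliptic theory already in place. The operator $L$ carries a regular oblique (Robin) boundary condition, hence is Fredholm of index zero on the even $2\pi$-periodic spaces; by hypothesis $\dim\ker L=1$, so $\mathrm{Im}(L)$ has codimension one (this is exactly condition (2) of Thm. \ref{t-bifu} that the lemma supplies). The functional
$$(h_1,h_2)\longmapsto\int_\Sigma u_0 h_1\,d\Sigma+2H\!\int_0^{2\pi}\!\left(u_0(-s_0,t)h_2(-s_0,t)+u_0(s_0,t)h_2(s_0,t)\right)dt$$
is a nonzero continuous linear form which, by the necessity argument, vanishes on $\mathrm{Im}(L)$; since the image has codimension one it must coincide with the kernel of this form, giving the asserted equivalence.

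The main obstacle is the boundary bookkeeping in the second step: cancelling the $\q$-terms cleanly, handling the sign of the exterior conormal at $s=-s_0$ (where $\partial_\nu v=-v_s/r$ while $\mathbf{b}$ is written with $+\q v$), and pinning down the geometric weight that converts the Euclidean factor $1/r$ into $-2H$ with the correct sign between the two boundary lines. Everything else is the standard self-adjoint Green identity together with the codimension-one fact; the delicate content lies entirely in matching the two contact lines to the right-hand side of \eqref{oo}, and the parity relations $f_1(s_0)^2=f_1(-s_0)^2$ recorded just before the lemma serve as a convenient consistency check on those signs.
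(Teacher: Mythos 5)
Your overall strategy is the right one, and it is in fact the only ``proof'' the paper offers: the text does not prove Lemma \ref{le-vo} but refers to Vogel's Lem.~3.12, whose argument is exactly your two-step scheme (necessity by Green's second identity for the formally self-adjoint $\mathcal{L}$, sufficiency from the codimension-one image supplied by elliptic/Fredholm theory). The use of $\mathcal{L}u_0=0$ and $\mathbf{b}(u_0)=0$ to reduce the boundary integrand to $u_0(\partial_\nu v-\q v)$, and the identification of the cokernel with the single linear functional, are all correct and standard.

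The gap is precisely the step you postpone as ``bookkeeping,'' because carrying it out with the paper's conventions does not land on \eqref{oo} as printed. With $\mathbf{b}(v)=\tfrac1r v_s\mp\q v$ ($-$ at $s_0$, $+$ at $-s_0$) and $\partial_\nu v=\pm v_s/r$ at $s=\pm s_0$, one gets $\partial_\nu v-\q v=h_2$ on the line $s=s_0$ but $\partial_\nu v-\q v=-h_2$ on the line $s=-s_0$ (there $\mathbf{b}(v)=-(\partial_\nu v-\q v)$). Likewise $\mathbf{b}(u_0)=0$ gives $\partial_\nu u_0=\q u_0$ at both ends. Hence Green's identity, with $d\Sigma=r\,ds\,dt$, yields
\begin{equation*}
\int_0^{2\pi}\!\!\int_{-s_0}^{s_0}u_0h_1\,ds\,dt=\frac1r\int_0^{2\pi}\Bigl(u_0(s_0,t)h_2(s_0,t)-u_0(-s_0,t)h_2(-s_0,t)\Bigr)dt,
\end{equation*}
i.e.\ a \emph{difference} of the two contact-line contributions with coefficient $1/r=\pm 2H$, whereas \eqref{oo} asserts a \emph{sum} with coefficient $-2H$. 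These agree only when $u_0(s_0,\cdot)h_2(s_0,\cdot)$ or $u_0(-s_0,\cdot)h_2(-s_0,\cdot)$ vanishes, which is not the case for general $(h_1,h_2)$. So either you must exhibit a different sign convention for $\mathbf{b}$ on the component $s=-s_0$ that the paper leaves implicit, or the identity you actually prove is the displayed one and \eqref{oo} carries a sign slip inherited from adapting Vogel's lemma (harmless for the application in \eqref{xxxx}, where the right-hand side vanishes by $f_1(s_0)^2=f_1(-s_0)^2$, but not for the lemma as a standalone statement). As written, your claim that collecting the boundary terms ``produces the right-hand side of \eqref{oo}'' is therefore not established; you should either resolve the convention or record the corrected identity. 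The sufficiency half is fine once the correct annihilating functional is fixed, since it only uses that the functional is nonzero, continuous, and vanishes on $\mathrm{Im}(L)$, which has codimension one.
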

 Using the divergence theorem in the right hand-side of \eqref{oo}, we have    $(h_1,h_2)\in \mathrm{Im}(L)$ if and only if 
 $$\int_0^{2\pi}\int_{-s_0}^{s_0}  u_0(h_1-\mathcal{L}[g])=0.$$
 This proves that the image of $L$ can be written as an $L^2$-orthogonal complement  to $u_0$, proving that its codimension is $1$.

 Finally, we investigate the condition (3) of Thm.  \ref{t-bifu}. We replace the radii $r$ of   cylinders in terms of the mean curvatures. Recall that $H=\pm 1/(2r)$ depending on the orientation of $\Sigma$. To simplify the notation, we will write $H=1/(2r)$ understanding a possible change of sign.  From \eqref{qq} and \eqref{l1}, the expression of $L$ is 
$$
 L[u]=\left(4H^2 u_{ss}+u_{tt}+4H^2 u, 2Hu_s \mp\left(\frac{\kappa}{\sin\gamma}+2H\cot\gamma\right)u\right).
$$
Therefore, 
 \begin{equation}\label{duh}
 D_HL[u]=(8H(u_{ss}+u), 2  u_s \mp 2 \cot\gamma u).
\end{equation} 
We know that  $u_0(t,s)=f_1(s)\cos(t)$, where $f_1(s)$ has been obtained in the previous discussion of the three cases.  First let us compute $D_HL[u_0]$ in \eqref{duh}. Let  $1+\lambda r^2-n^2 r^2=\epsilon\beta^2$, where $\epsilon$ is $1$, $0$ or $-1$  depending on the above cases (1), (2) and (3), respectively. Then \eqref{duh} yields
\begin{equation*}
\begin{split}
D_HL[u_0]&=(8H((u_0)_{ss}+u_0,2(u_0)_s\mp \cot\gamma u_0)\\ 
&=\left(8H((f_1)_{ss}+f_1) ,2((f_1)_s) \mp \cot\gamma f_1\right)\cos(t) \\
&=\left(8H(1-\epsilon \beta^2)f_1,2((f_1)_s) \mp \cot\gamma f_1\right)\cos(t),
\end{split}
\end{equation*}
Let us use Lemma \ref{le-vo} and the above expression of $D_HL[u_0]$. The left-hand side of \eqref{oo} is 
\begin{equation}\label{xxx}
8H(1-\epsilon\beta^2)\int_{-s_0}^{s_0}f_1(s)^2\, ds\int_0^{2\pi}\cos^2(t)\, dt.
\end{equation}
 For the right-hand side   of \eqref{oo}, it is  used the boundary conditions that satisfies $f_1$, obtaining
\begin{equation}\label{xxxx}
\begin{split}
&-4H\left(f_1(-s_0)(f_1'(-s_0)+\cot\gamma f_1(-s_0))+f_1(s_0)(f_1'(s_0)-\cot\gamma f_1(-s_0))\right)\int_0^{2\pi}\cos^2(t)\, dt\\
=&-4H\ \left(f_1(-s_0)^2(-\frac{\mu(-\tau_0)}{\sin\gamma}+\cot\gamma )+f_1(s_0)^2(\frac{\mu(\tau_0)}{\sin\gamma}-\cot\gamma )\right)\int_0^{2\pi}\cos^2(t)\, dt\\
=& -4H (f_1(s_0)^2-f_1(-s_0)^2)\frac{\mu(\tau_0)}{\sin\gamma} \int_0^{2\pi}\cos^2(t)\, dt\\
=&\, 0,
\end{split}
\end{equation}
where we have used $\mu(-\tau_0)=\mu(\tau_0)$ and in the last identity, that  $f_1(s_0)^2=f_1(-s_0)^2$. From  \eqref{xxx} and \eqref{xxxx}, it is deduced that  $D_HL[u_0]\not\in\mathrm{Im}(L)$ if and only if  
\begin{equation}\label{e3}
(1-\epsilon\beta^2)\int_{-s_0}^{s_0}   f_1(s)^2\, ds\not=  0.
\end{equation}
This is equivalent to $\beta\not=1$. Therefore, we need to come back to the solutions of \eqref{v1}, \eqref{v2} and \eqref{v3} and check this condition. For \eqref{v1}, it was required that $\beta<1$ in order to ensure instability of the surface, in particular, $\beta\not=1$. For the solutions of \eqref{v2} we have $\epsilon=0$, so \eqref{e3} is verified.  As a conclusion,  we obtain the next result of bifurcation.

 \begin{theorem}\label{t4}
 Let $\Sigma$ be a section of a circular cylinder which it is a capillary surface on $\s(\c)$ with contact angle $\gamma$. If there is a unique solution $\beta$ of equations \eqref{v1}, \eqref{v2} and \eqref{v3}, with $\beta\not=1$, then $0$ is a simple eigenvalue of $L$ and consequently  there is a one-parameter family of $2\pi$-periodic capillary surfaces on $\s(\c)$ with contact angle $\gamma$ which bifurcate from $\Sigma$.  
 \end{theorem}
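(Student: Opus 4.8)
The plan is to apply the Crandall--Rabinowitz bifurcation theorem (Thm. \ref{t-bifu}) to the map $\mathcal{F}\colon V\times\r\to C^\infty_{2\pi,even}(\Sigma)\times C^\infty_{2\pi,even}(\partial\Sigma)$, taking $H_0=1/(2r_0)$ with $r_0$ the radius of $\Sigma$ and $L=D_u\mathcal{F}(0,H_0)$ as in \eqref{18}. The three hypotheses of Thm. \ref{t-bifu} have essentially been prepared in the discussion preceding the statement, and the proof consists in assembling them. The standing assumption that $0$ is the \emph{second} eigenvalue of $L$ fixes the mode $n=1$ in the system \eqref{b4}, and the condition $\lambda_1=0$ then pins down the radius $r$ in each of the three cases $\epsilon\in\{1,0,-1\}$.

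First I would verify hypothesis (1). Under the assumption that $\beta$ is the unique solution of \eqref{v1}, \eqref{v2} or \eqref{v3}, the separation-of-variables analysis forces every Fourier mode with $n\neq 1$ to vanish, so the kernel is spanned by $f_1(s)\cos(t)$ and $f_1(s)\sin(t)$; restricting to functions even in $t$ discards the latter, whence $\ker(L)=\mathrm{span}(u_0)$ with $u_0(t,s)=f_1(s)\cos(t)$ is one-dimensional. Next, for hypothesis (2), I would invoke Lemma \ref{le-vo}: its characterization \eqref{oo}, once the right-hand side is rewritten by the divergence theorem, identifies $\mathrm{Im}(L)$ with the $L^2$-orthogonal complement of $u_0$, so its codimension is $1$.

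The remaining and most delicate point is hypothesis (3), the transversality $D_HL[u_0]\notin\mathrm{Im}(L)$. Here I would compute $D_HL[u_0]$ from \eqref{duh} and feed it into the criterion of Lemma \ref{le-vo}. The boundary contribution \eqref{xxxx} collapses to a multiple of $f_1(s_0)^2-f_1(-s_0)^2$, which vanishes because the symmetry identity $f_1(s_0)^2=f_1(-s_0)^2$ holds in all three cases (where $f_1$ equals $A\cos(\beta s)$ or $B\sin(\beta s)$, or $Bs$, or $2A\cosh(\beta s)$). Consequently, by \eqref{xxx} the transversality condition reduces to $(1-\epsilon\beta^2)\int_{-s_0}^{s_0}f_1(s)^2\,ds\neq 0$, that is, to $\beta\neq 1$, which is exactly the remaining hypothesis of the theorem. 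With (1)--(3) in hand, Thm. \ref{t-bifu} yields a continuously differentiable curve of solutions of $\mathcal{F}=0$ transverse to the family of cylinders $\Sigma_{w_H}$, and the corresponding normal graphs are the sought $2\pi$-periodic capillary surfaces on $\s(\c)$ bifurcating from $\Sigma$.

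I expect the main obstacle to be precisely hypothesis (3): while (1) and (2) follow fairly mechanically from the eigenvalue computation and the image lemma, (3) depends on the explicit boundary cancellation, and hence on establishing the identity $f_1(s_0)^2=f_1(-s_0)^2$ case by case. It is this cancellation that isolates the clean nondegeneracy $\beta\neq 1$ from the otherwise messy boundary terms and thereby closes the argument.
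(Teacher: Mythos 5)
Your proposal is correct and follows essentially the same route as the paper: it assembles the three Crandall--Rabinowitz hypotheses from the separation-of-variables computation of the kernel, the image characterization of Lemma \ref{le-vo}, and the transversality computation \eqref{xxx}--\eqref{xxxx}, where the boundary terms cancel via $f_1(s_0)^2=f_1(-s_0)^2$ and the condition reduces to $\beta\not=1$. This matches the paper's argument in both structure and detail.
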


 In fact, returning to the arguments, the solution of \eqref{v1}, \eqref{v2} and \eqref{v3} is unique except if there are many solutions (and less than $1$) for \eqref{v1}.  Another observation   worth mentioning is the choice $n=1$ in  \eqref{b4}. With this choice we have obtained $2\pi$-periodic surfaces in the $t$-direction. However, another natural number $n\in\n$ could have been chosen. The difference is that now the surface would be $2\pi/n$-periodic in the $t$-variable. Anyway, the new family of surfaces obtained from the bifurcation with this value $n\in\n$ would be the same of Thm.  \ref{t4} after up scaled by the factor $n$.

Once we have arrived at this stage, the problem to prove that $0$ in an eigenvalue of $L$  is difficult to address   in all   generality of support surfaces and  contact  angles $\gamma$.   In the next section, we will apply Thm.  \ref{t4} in some explicit examples.


\section{Bifurcation in explicit examples of support surfaces}\label{sec6}

We  study in this section the bifurcation of Thm.  \ref{t4} in   particular supports. By the nature of equations   \eqref{v1}, \eqref{v2} and \eqref{v3}, the computations are hard to carry out. Here we restrict to the case that the contact angle is   $\gamma=\pi/2$. This situation of free boundary case  has its own interest because it represents the situation $\sigma_{SG}=\sigma_{SL}$ for  the surface tensions.  The contact angle $\pi/2$  is expected because it means that the solid behaves equally for the air and liquid phases. This scenario has been already studied in the literature for the case that the support is a horizontal plane or a circular cylinder (see references in the Introduction). The novelty of the present paper is that $\s(\c)$ is of arbitrary shape, which it is has not been previously considered. The purpose of this section  is to determine explicitly the cylinder $\Sigma$ of Thm.  \ref{t4} which it is  used in the bifurcation techniques.

Now we have   $\sin\gamma=1$ and  $\mu(\tau_0)=r\kappa(\tau_0)$. The supports to investigate are those whose generating curves $\c$ are:
\begin{enumerate}
\item A parabola  and $\kappa>0$. 
\item A parabola  and   $\kappa<0$.
\item A catenary and  $\kappa>0$.
\end{enumerate}
In the first and third cases, we will prove the existence of a new family of capillary surfaces bifurcating from a circular cylinder, whereas in the second case, we will see that $0$ is not an eigenvalue of $L$. The situation when the support surface is a horizontal plane or a wedge was studied  by the author in \cite{lo1,lo2}. In both references (or now using Thm.  \ref{t4}), it is possible to give a mathematical proof of the bifurcation phenomenon of  several   experiments carried out at the Max Planck Institute of Colloids and Interfaces (MIPKG), at Potsdam.   These experiments showed the formation of budges after spreading liquid  on microchannels formed alternatively by   hydrophilic and hydrophobic strips and of long liquid channels supported on a wedge \cite{bkk,ghll,kmlr,li}. 

 First we describe the two steps in the process.
\begin{enumerate}
\item Calculate the one-parameter family of capillary circular cylinders with contact angle $\gamma=\pi/2$. 

 \item Solve  \eqref{v1}, \eqref{v2} and \eqref{v3} and check the uniqueness of solutions. 
\end{enumerate}

\subsection{The support surface is a parabolic cylinder  and $\kappa>0$}

 The parametrization of the parabola is $\c(\tau)=(\tau,0,\tau^2)$. Since   $\gamma=\pi/2$,   the circle $\R$ generating the  cylinder is centered at $(0,0,-\tau^2)$ and its radius  is $r=\tau\sqrt{1+4\tau^2}$. Because $\kappa>0$, we are taking the section  of circular cylinder  contained in the convex domain of $\r^3$ determined by $\s(\c)$: see Fig. \ref{fig6} left.  Using \eqref{k}, we know $\kappa=2/(1+4\tau^2)^{3/2}$. On the other hand, the relation between $\tau_0$ and $s_0$ is $\sin{s_0}=\tau_0/r$. 
 We   solve \eqref{v1}, \eqref{v2} and \eqref{v3}. We will see that in the discussion of \eqref{v3}, cylinders do bifurcate. 

\begin{enumerate}
\item Solutions of \eqref{v1}. We know that $\beta<1$ and   $r=\sqrt{1-\beta^2}$.  From the   value of $r$, the condition $\beta<1$ implies that $\tau_0<0.624$. Using this information, equations \eqref{v1}   have no  a solution and thus this case is not possible.

\item Solutions of \eqref{v2}. We have $r=1$ and  then  $\tau_0 = 0.624$. Hence, $s_0=0.674$. However  this value $s_0$ does not satisfy \eqref{v2} because $1/\mu(\tau_0)=2.049$. This case is not possible. 

\item Solutions of \eqref{v3}. Since $r>1$, then $\tau_0>0.624$. We know that  $r=\sqrt{\beta^2+1}$.  By solving  \eqref{v3}, we obtain the solution $\tau_0= 0.754$, or equivalently, $s_0=0.585$. Using this value, then $r= 1.364$ and $\beta= 0.928$. For these values of $r$ and $\beta$, bifurcation occurs according to  Thm.  \ref{t4} is of radius $1.364$ and  the center of the generating circle $\R$ is $(0,0,-0.568)$.  The function $u_0$ is $u_0(t,s)=\cosh(\beta s)\cos t$ and \eqref{e3} holds because $\beta\not=1$.
\end{enumerate}
\subsection{The support surface is a parabolic cylinder  and $\kappa<0$}

We now consider the case that the support surface is a parabola but assuming that $\kappa$ is negative. This means that the fluid lies below $\s(c)$, see Fig. \ref{fig5} right. We will show that $0$ is not an eigenvalue of $L$ and, consequently, there is not  bifurcation by simple eigenvalues.   The circular cylinder is  the same as in the case $\kappa>0$ but considering the section of this   cylinder which lies below $\s(\c)$. Now      $\kappa=-2/(1+4\tau^2)^{3/2}$ and  $\mu=r\kappa=-2\tau/(1+4\tau^2)$.

\begin{enumerate}
\item Solutions of \eqref{v1}.   The condition   $\beta<1$ implies   $\tau_0< 0.624$. However both equations \eqref{v1} have no   solutions. 
\item Solutions of \eqref{v2}.     Since $\mu<0$, there is no positive solution of \eqref{v2}.   This case is not possible. 

\item Solutions of \eqref{v3}.    Now $0$ is an eigenvalue of $L$ if $r=\sqrt{ \beta^2+1}$, where $\beta$ is a solution of \eqref{v3}. The condition $\beta>1$ implies that  $\tau_0> 0.624$. Equation   \eqref{v3} has no solution and this case is not possible.  
 
\end{enumerate}

\subsection{The support surface is a catenary cylinder and $\kappa>0$}

 The parametrization of the catenary is $\c(\tau)=(\tau,0,\cosh(\tau))$. Now the circle $\R$ generating $\Sigma$   is centered at $(0,0,-\tau \sinh(\tau)+\cosh(\tau))$ and its radius is $r=\tau\cosh(\tau)$. The curvature $\kappa=\widetilde{A}(\widetilde{\nu},\widetilde{\nu})$ is positive because $-\widetilde{N}$ points inwards. The curvature of $\c(\tau)$ is $\kappa=1/\cosh(\tau)^2$.
 We find the solutions of  \eqref{v1}, \eqref{v2} and \eqref{v3}. \begin{enumerate}
\item Solutions of \eqref{v1}.     Since  $\beta<1$, then $\tau<0.765$. In the interval $(0,0.765)$, the equations \eqref{v1} have no solutions.

\item Solutions of \eqref{v2}.    Now $r=1$ gives $\tau_0= 0.765$ and $s_0=0.871$. Then $1/\mu(\tau_0)=1.707$, hence  that $s_0$ does not satisfy   \eqref{v2}. This case is not possible. 

\item Solutions of \eqref{v3}.     From $\beta=\sqrt{r^2-1}$, we deduce that the domain of $\tau$ is $(0.765,\infty)$. Then \eqref{v3} has a unique solution at  $\tau_0= 0.954$. Then  $\beta= 1.013$, in particular $\beta\not=1$. For these values of $r$ and $\beta$, bifurcation occurs according to  Thm.  \ref{t4} applies. The value of the radius of the cylinder that bifurcates is $r=1.423$ and the center of $\c$ is $(0,0,0.435)$.   The function $u_0$ is $\cosh(\beta s)\cos t$.  

\end{enumerate}

\section*{Acknowledgements} This research has been partially supported by MINECO/MICINN/FEDER grant no. PID2023-150727NB-I00, and by the ``Mar\'{\i}a de Maeztu'' Excellence Unit IMAG, reference CEX2020-001105- M, funded by MCINN/AEI/10.13039/501100011033/ CEX2020-001105-M.



\begin{thebibliography}{99}
 



   \bibitem{ad}  Adamson, A. W.,  Gast, A. P.:    Physical chemistry of surfaces.
(sixth ed.), John Wiley $\&$ Sons. (1997)

\bibitem{alp}  Al\'{\i}as, L., L\'opez, R.,    Palmer, B.:  Stable constant mean curvature surfaces with circular boundary.  Proc.A.M.S.  127,   1195--1200 (1999)

 \bibitem{and} Andersson, S., Hyde, S. T., Larsson, K., et al.:  Minimal surfaces and structures: From inorganic and metal crystals to cell membranes and biopolymers. Chem Rev.  88, 221--242 (1988)

 \bibitem{bc}  Barbosa, J. L.,  do Carmo M.: 
 Stability of hypersurfaces with constant mean curvature.
 Math. Z. 185,  339--353 (1984)
 
 \bibitem{benilov}  Benilov, E.:  On the stability of shallow rivulets.  J. Fluid Mech. 636,  455--474 (2009)



 \bibitem{bst1}    Bostwick, J. B., Steen, P. H.:   Stability of constrained capillary surfaces.  Annu. Rev. Fluid Mech.  47,   539--568 (2015)
 
  \bibitem{bkk}   Brinkmann, M., Lipowsky, R.: 
Wetting morphologies on substrates with striped surface domains. 
J. Appl. Phys. 92,  4296--4306 (2002)

 






  \bibitem{bs}   Brown, R. A., Scriven,  L. E.: 
 On the multiple equilibrium shapes and stability of an
interface pinned on a slot.  J. Colloid Interface Sci. 78,   528--542 (1980)


\bibitem{bru}  Brubaker, N. D.:  Shapes of large, static soap bubbles. Proc. R. Soc. A 477: 20200851 (2020)



\bibitem{concus} Concus, P., Finn,  R.:  On the behavior of a capillary surface in a wedge.   Proc. Natl Acad. Sci. USA  63,  292--299 (1969)



\bibitem{cr}   Crandall, M., Rabinowitz, P.: 
 Bifurcation from simple eigenvalues. 
 J. Functional Analysis 8,  321--340 (1971)
 
 \bibitem{dav}    Davis, S. H.:  Moving contact lines and rivulet instabilities. Part 1: The
static rivulet.    J. Fluid Mech.  98,  225--242 (1980)

   \bibitem{dege}    de Gennes, P-G., Brochard-Wyart, F., Quer\'e D.:    Capillarity and wetting phenomena. Springer-Verlag, New York (2004) 
   
   \bibitem{ghll}  Gau, H., Herminghaus, S., Lenz, P., Lipowsky,  R.: 
 Liquid microchannels on structured surfaces. 
Science  283,  46--49 (1999)

   \bibitem{herrada}  Herrada, M. A., Mohamed, A. S. , Montanero, J. M., Gan\'an-Calvo,  A. M.:  Stability of a rivulet flowing in a microchannel. Int. J.  Multiphase Flow  69,  1--7 (2015)
   
\bibitem{hyd} Hyde, S., Blum, Z., Landh, T., et al.:    The language of shape: the role of curvature in condensed matter: physics, chemistry and biology. Amsterdam: Elsevier (1996)

\bibitem{hys} Hyde, S. T., Schr\"{o}der-Turk, G. E.: Geometry of interfaces: topological complexity in biology and materials.  Interface Focus.   2, 529--538 (2012) 

\bibitem{ko}   Koiso, M.: 
 Deformation and stability of surfaces with constant mean curvature. Tohoku Math. J. 54. 145--159 (2022)

\bibitem{kpm1}  Koiso, M., Palmer, B., Piccione, P.:  Bifurcation and symmetry breaking of nodoids with fixed boundary.    Adv. Calc. Var. 8,  337--370 (2015)

\bibitem{kpm2}  Koiso, M., Palmer, B., Piccione, P.:  Stability and bifurcation for surfaces with constant mean curvature.     J. Math. Soc. Japan  69 1519--1554 (2017)

\bibitem{kmlr}  
  Kusumaatmaja, H.,  Lipowsky, R.,   Jin, C., Mutihac, R. C.,   Riegler, H.:  Nonisomorphic nucleation pathways arising from morphological transitions of liquid channels. 
Physical Review Letters 108, 126102 (2012)

\bibitem{lan}  Langbein, D.: 
 The shape and stability of liquid menisci in solid edges.  J. Fluid Mech. 213,  251--265 (1990)
 
 \bibitem{la}      Laplace,  P. S.:   On capillary attraction. Supplement to the tenth book of the M\'echanique c\'eleste, translated by N. Bowditch, Vol. IV, pp.685--1018,Chelsea,  New York (1966)  
 
\bibitem{li}    Lipowsky, R.: 
 Structured surfaces and morphological wetting transitions,
Interface Science 9,  105--115 (2001)



\bibitem{lo1}    L\'opez, R.: 
 Bifurcation of cylinders for wetting and dewetting models with striped geometry. SIAM J. Math. Analysis 44,  946--965 (2012)

\bibitem{lo0}  L\'opez, R.:   Constant mean curvature surfaces with boundary. Springer Science, New York (2013)

 \bibitem{lo00}    L\'opez, R.: 
 Capillary surfaces with free boundary in a wedge.  Adv. Math. 262, 476--483 (2014)

  
\bibitem{lo2}   L\'opez, R.: 
 Stability and bifurcation of a capillary surface on a cylinder.  SIAM J. Appl. Math. 77, 108--127 (2017)
 
\bibitem{maj}  Majumbar, S. R.,  Michael, D. H.:    The equilibrium and stability of two-dimensional pendent drop.    Proc. R. Soc. Lond. A 351,  89--115 (1976)


    \bibitem{pl} Plateau, J. A. F.:  Statique exp\'erimentale et th\'eorique des liquides soumis aux seules forces mol\'eculaires. vol. 2. Gauthier-Villars (2018)

 
\bibitem{ra}  Rayleigh, J. W. S.:  On the instability of jets. Proc. London Math. Soc. 10, 4--13 (1879)

 \bibitem{rv}  Ros, A.,  Vergasta, E.: Stability for hypersurfaces of constant mean curvature with free boundary.    Geom. Dedicata  56,   19--33 (1995)
 
 \bibitem{rss}   Roy, R.,  Schwartz, L. W.: 
 On the stability of liquid ridges.   J. Fluid Mech. 391,    293--318 (1999)
 
 \bibitem{smo} Smoller, J.: Shock waves and reaction-diffusion equations. Springer-Verlag, New York (1983)
 
 \bibitem{sl}     Speth, R. L., Lauga, E.: 
 Capillary instability on a hydrophilic stripe. New J. Phys. 11, 075024 (2009)


  \bibitem{tho}  Thomas, E. L., Anderson, D. M., Henkee, C. S. et al.:  Periodic area-minimizing surfaces in block copolymers.  Nature 334, 598--601 (1988)
  
  \bibitem{ubal}  Ubal, S., Grassia, P., Campana, D., Giavedoni, M., Saita, F.:  The influence of inertia and contact angle on the
instability of partially wetting liquid strips: A numerical analysis study.  Phys. Fluids    26, 032106 (2014)


 \bibitem{vo5}   Vogel, T. I.: Stability of a liquid drop trapped between two parallel planes.  SIAM J. Appl. Math. 47, 516--525 (1987)
 
    \bibitem{vo6}   Vogel, T. I.:  Stability of a liquid drop trapped between two parallel planes II: general contact angles.   SIAM J. Appl. Math.  49, 1009--1028 (1989)
    
    \bibitem{vo}   Vogel, T. I.: Stability of a surface of constant mean curvature in a wedge.
Indiana Univ. Math. J. 41,   625--648 (1992)


\bibitem{vo2}   Vogel, T. I.:  Sufficient conditions for capillary surfaces to be energy minima.
Pacific J. Math. 194,   469--489 (2000)

 
\bibitem{vo12} T. I. Vogel, Comments on radially symmetric liquid bridges with inflected profiles. Discrete Contin. Dyn. Syst. 2005, suppl., 862--867.


     \bibitem{vo3}   Vogel, T. I.:   Liquid bridges between balls: the small volume instability.    J. Math. Fluid Mech. 15,  397--413 (2013)



 
  \bibitem{vo4}   Vogel, T. I.:     Capillary surfaces in circular cylinders.  J. Math. Fluid Mech. 23,  Paper No. 68, 13 pp. (2021)
  
 

\bibitem{we}   Wente,  H. C.:    The symmetry of sessile and pendent drops. Pacific J. Math. 88, 387--397 (1980)

\bibitem{we-c} Wente, H. C.:  The capillary problem for an infinite trough.  Calc. Var. 3, 155--192 (1995)

\bibitem{we-c2} Wente, H. C.: A surprising bubble catastrophe.  Pac. J. Math. 189, 339--375 (1999)

  



\bibitem{yh}   Yang, L.,    Homsy, G. M.: 
  Capillary instabilities of liquid films inside a wedge.   Phys. Fluids  19,  044101 (2007)
  
\bibitem{yd} Young,  G. W., Davis, S. H.:   Rivulet instabilities. J. Fluid Mech. 176, 1--31 (1987)

\bibitem{yo} Young,  T.:   An essay on the cohesion of fluids.   Philos. Trans. Roy. Soc. London  Ser. A.   95, 65--87 (1805)


 
 
  \bibitem{zz} Zhou, X., Zhang, F.:   Bifurcation of a partially immersed plate between two parallel plates.   J. Fluid Mech. 817, 122--137 (2017)


 
 





\end{thebibliography}
\end{document}